        \theoremstyle{plain}
        \newtheorem{theorem}{Theorem}[section]
        \newtheorem{corollary}[theorem]{Corollary}
        \newtheorem{lemma}[theorem]{Lemma}
        \newtheorem{proposition}[theorem]{Proposition}
        \newtheorem{maintheorem}{Theorem}
        \theoremstyle{definition}
        \newtheorem{definition}[theorem]{Definition}
        \theoremstyle{remark}
        \newtheorem{remark}[theorem]{Remark}
        \newcommand{\suchthat}{\,:\,}
        \newcommand{\itemref}[1]{\eqref{#1}}
        \newcommand{\Z}{\mathbb{Z}}
        \newcommand{\F}{\mathbb{F}} 
        \newcommand{\Q}{\mathbb{Q}}
        \newcommand{\Orb}{\mathcal{O}}   
       \DeclareMathOperator{\spec}{Spec} 
        \newcommand{\red}[1]{{#1}_{\mathrm{red}}} 
           \newcommand{\lisset}{\mathrm{lis\text{\nobreakdash-}\acute{e}t}}
           \newcommand{\aff}{\mathrm{aff}}
        \newcommand{\MOD}{\mathsf{Mod}}    
         \DeclareMathOperator{\Hom}{Hom}
        \newcommand{\COHO}[1]{\mathcal{H}^{{#1}}}
        \newcommand{\trunc}[1]{\tau^{{#1}}}
        \newcommand{\RDERF}{\mathsf{R}}
        \newcommand{\LDERF}{\mathsf{L}}
        \newcommand{\DCAT}{\mathsf{D}}
        \newcommand{\SHom}{\mathcal{H}om}
        \newcommand{\SRHom}{\RDERF\SHom}
        \newcommand{\QCOH}{\mathsf{QCoh}}
        \newcommand{\COH}{\mathsf{Coh}}
        \renewcommand{\bar}[1]{\overline{{#1}}}
        \newcommand{\ID}[1]{\mathrm{Id}_{#1}}
        \newcommand{\tensor}{\otimes}
        \newcommand{\homotopic}{\simeq}
                \newcommand{\AB}{\mathsf{Ab}}
\renewcommand{\mid}{|}
\renewcommand{\subset}{\subseteq}
\numberwithin{equation}{section}
\newcommand{\qcsubscript}{\mathrm{qc}} 
\newcommand{\DQCOH}[1][]{\DCAT_{\qcsubscript{#1}}} 
\newcommand{\QCOHPSH}[1]{(#1_{\QCOH})_*} 
\newcommand{\MODPSH}[1]{(#1_{\lisset})_*} 
\newcommand{\Galpha}{\pmb{\alpha}} 
\newcommand{\GL}{\mathrm{GL}} 
\newcommand{\Gmu}{\pmb{\mu}} 
\newcommand{\Gm}{\mathbb{G}_m} 
\newcommand{\Ga}{\mathbb{G}_a} 
\DeclareMathOperator{\cohdim}{cd} 
\newcommand{\labitem}[2]{%
\def\@itemlabel{(\textbf{#1})}
\item
\def\@currentlabel{\textbf{#1}}\label{#2}}
\newcommand{\Tr}{\mathrm{Tr}} 
\newcommand{\Rep}{\mathsf{Rep}}
\newcommand{\ant}{\mathrm{ant}} 
\title[Compact generation of derived categories of representations]{Algebraic groups and compact generation of their derived categories of representations}
\date{Dec 3, 2015}
\author[J. Hall]{Jack Hall}
\address{Mathematical Sciences Institute\\The Australian National
  University\\Acton ACT 2601\\Australia}
\email{jack.hall@anu.edu.au}
\author[D. Rydh]{David Rydh}
\address{KTH Royal Institute of Technology\\Department of Mathematics\\SE\nobreakdash-100\ 44\ Stockholm\\Sweden}
\email{dary@math.kth.se}
\thanks{This collaboration was supported by the G\"oran Gustafsson foundation.
The second author is also supported by the Swedish Research Council 2011-5599.}
\subjclass[2010]{Primary 14F05; secondary 13D09, 14A20, 18G10}
\newcounter{saveenum}
\keywords{Derived categories, algebraic stacks}
\begin{document}
\begin{abstract}
  Let $k$ be a field. We characterize the group schemes $G$ over $k$, not necessarily affine, such that $\mathsf{D}_{\mathrm{qc}}(B_kG)$ is compactly generated. We also describe the algebraic stacks that have finite cohomological dimension in terms of their stabilizer groups.

\end{abstract}
\maketitle
\section*{Introduction}
In this article
we characterize two classes of group schemes over a field $k$:
\begin{enumerate}
\item those with compactly generated derived categories
of representations; and
\item those with finite (Hochschild) cohomological dimension.
\end{enumerate}
%
\subsection*{Compact generation}
Let $X$ be a quasi-compact and quasi-separated algebraic stack. Let $\DQCOH(X)$ be the unbounded derived category of lisse-\'etale $\Orb_X$-modules with
quasi-coherent cohomology sheaves. 

In~\cite{perfect_complexes_stacks}, we
showed that $\DQCOH(X)$ is compactly generated in many cases.
This does not always hold, however. With Neeman, we considered $B_k\Ga$---the
classifying stack of the additive group scheme over a field $k$---and proved
that every compact object of $\DQCOH(B_k\Ga)$ is $0$ if $k$ has positive
characteristic~\cite[Prop.~3.1]{hallj_neeman_dary_no_compacts}. In particular,
$\DQCOH(B_k\Ga)$ is not compactly generated.

If $\DQCOH(X)$ is compactly generated, then for every
point $x\colon \spec k\to X$ it follows that $\DQCOH(B_kG_x)$ is compactly
generated, where $G_x$ denotes the stabilizer group of $x$.
It follows that the presence of a $\Ga$ in a stabilizer group of positive
characteristic is an obstruction to compact
generation~\cite[Thm.~1.1]{hallj_neeman_dary_no_compacts}. We called such stacks 
\emph{poorly stabilized}. Our first main result is that this obstruction is the
only point-wise obstruction.
\begin{maintheorem}\label{MT:compact-gen-of-BG}
Let $k$ be a field, let $G$ be a group scheme of finite type over $k$
and let $\overline{G}=G\otimes_k \overline{k}$. Then
$\DQCOH(B_kG)$ is compactly generated if and only if
\begin{enumerate}
\item $k$ has characteristic zero or
\item $k$ has positive characteristic and the reduced connected component
  $\red{\overline{G}}^0$ is semi-abelian.
\end{enumerate}
Moreover, if $\DQCOH(B_kG)$ is compactly generated, then it is compactly generated by
\begin{enumerate}
\renewcommand{\theenumi}{\alph{enumi}}
\item\label{MT:compact-gen-of-BG:one} a single perfect complex if and only
  if the affinization of $\red{\bar{G}}^0$ is unipotent (e.g., if $G$
  is proper or unipotent); or
\item\label{MT:compact-gen-of-BG:comp-reps} the set of
  $k$-representations of $G$ that have compact image in $\DQCOH(B_kG)$
  when $G$ is affine; or
\item\label{MT:compact-gen-of-BG:irreps-fcd} the set of irreducible
  $k$-representations of $G$ when $G$ is affine and $k$ has
  characteristic zero or $G$ is linearly reductive.
\end{enumerate}
\end{maintheorem}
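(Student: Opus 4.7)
The plan is to treat the main dichotomy (1)--(2) first, by reducing to $G$ smooth and connected over $\bar k$ and invoking Chevalley's structure theorem, and then to derive the refinements (a)--(c) from the structure of the compact generators that emerges. For the ``only if'' direction, suppose $k$ has positive characteristic and $\red{\bar{G}}^0$ is not semi-abelian. Chevalley's theorem gives an extension $1 \to H \to \red{\bar{G}}^0 \to A \to 1$ with $H$ smooth connected affine over $\bar k$ and $A$ an abelian variety; the semi-abelian hypothesis is precisely that $H$ be a torus. Under our failure hypothesis, $H$ therefore contains a positive-dimensional smooth connected unipotent subgroup, which in positive characteristic contains a copy of $\Ga$ (via the standard composition series with $\Ga$ factors). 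The unique geometric point of $B_k G$ then has stabilizer containing $\Ga$, so $B_k G$ is poorly stabilized and \cite[Thm.~1.1]{hallj_neeman_dary_no_compacts} yields the failure of compact generation.

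For the ``if'' direction I would make three reductions: the nilpotent thickening $B\red{G}\hookrightarrow BG$ preserves compact generation; the sequence $1\to G^0\to G\to\pi_0(G)\to 1$ presents $BG\to B\pi_0(G)$ as a finite \'etale gerbe, reducing to $G=G^0$; and rather than descending an abstract compact generator along $\spec\bar k\to\spec k$, one constructs generators directly over $k$ using $G$-representations and then checks generation after base change. After these reductions Chevalley yields $1\to H\to G\to A\to 1$ over $\bar k$. In characteristic zero, $H$ is a reductive-by-unipotent extension and both factors have finite cohomological dimension on $\QCOH$ (since $\Ga$ has $\cohdim=1$ in char~$0$), so $BG$ has finite cohomological dimension and the criteria from \cite{perfect_complexes_stacks} apply. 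In positive characteristic with the semi-abelian hypothesis, $H$ is a torus $T$ and $BG\to BA$ is a $BT$-gerbe; compact generators are then assembled from the character decomposition of $\DQCOH(BT)$ combined with a Fourier--Mukai-type description of $\DQCOH(BA)$.

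For the refinements: in (a), unipotent affinization of $\red{\bar G}^0$ kills all nontrivial characters and one checks that then $\Orb_{BG}$ (suitably twisted over the $\pi_0$ part) is already a compact generator; conversely, a nontrivial central torus contributes an infinite family of pairwise $\Ext$-orthogonal character line bundles, which cannot all be summands of the iterated cones built from a single perfect complex. In (b), when $G$ is affine $BG$ has the resolution property, every compact object admits a finite resolution by vector bundles (hence by $G$-representations), and the $k$-representations with compact image then span up to direct summands. In (c), under linear reductivity (or characteristic zero), every finite-dimensional representation decomposes as a finite sum of irreducibles, so combining this with (b) the irreducibles alone suffice.

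The main obstacle I foresee is the positive-characteristic semi-abelian case of the ``if'' direction: compactness must be tracked through the non-split gerbe fibration $BG\to BA$ with fibre $BT$, by combining a Leray-type spectral sequence argument with the Fourier--Mukai equivalence on $BA$, and this cannot be reduced to the two extremes $BT$ and $BA$ separately. A secondary technical point is the descent step from $\bar k$ to $k$, which I intend to bypass by constructing candidate generators representation-theoretically over $k$ directly rather than transporting them backwards from $\bar k$.
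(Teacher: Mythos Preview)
Your characteristic-zero argument contains a genuine error: you claim that $BG$ has finite cohomological dimension, but this is false whenever $G$ has a nontrivial abelian-variety quotient. If $A$ is a nontrivial abelian variety then $BA$ has \emph{infinite} cohomological dimension (Proposition~\ref{prop:coh-dim-of-anti-affine}; concretely $H^{2d}(BA,\Orb_{BA})\neq 0$ for all $d\geq 0$), and the extension $1\to H\to G\to A\to 1$ does not repair this. So ``finite cohomological dimension plus criteria from \cite{perfect_complexes_stacks}'' cannot be the mechanism. The same obstruction undermines your positive-characteristic plan: the base $BA$ of your fibration $BG\to BA$ again has infinite cohomological dimension, and tracking compactness through a $BT$-gerbe over such a base via an unspecified Fourier--Mukai description is not a viable route.

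The missing idea is to reverse the direction: use $f\colon BG_\aff\to BG$, which is an $A$-\emph{torsor}, hence proper, smooth, and locally schematic. Now $BG_\aff$ is concentrated with affine diagonal and the resolution property (since $G_\aff$ is affine, a torus in positive characteristic), so $\DQCOH(BG_\aff)$ is compactly generated by vector bundles. Grothendieck duality for $f$ (Proposition~\ref{P:quasi-perfect-smooth-proper}) shows that $\RDERF f_*$ preserves compacts and admits a conservative right adjoint $f^!$, so $\RDERF f_*$ carries compact generators to compact generators (Lemmas~\ref{L:cons-adjoint} and~\ref{L:qppsh}). This works uniformly in both characteristics and yields (b) for free. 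Two smaller issues: your reduction to $G^0$ via ``$BG\to B\pi_0(G)$ is a finite \'etale gerbe'' is wrong---that map is a $BG^0$-gerbe, not finite; use instead that $BG^0\to BG$ is a finite $\pi_0(G)$-torsor and apply Lemma~\ref{L:qppsh}. And in~(a), the obstruction to a single generator is not a \emph{central} torus in $G$ but a $\Gm\subset G_\aff$ with nontrivial image in the affinization $G/G_\ant$; on the converse side, the single generator is not $\Orb_{BG}$ but a pushforward $\RDERF\pi_*\Orb$ along a suitable torsor map, produced via Lemma~\ref{L:cons-extension}.
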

A group scheme is \emph{semi-abelian} if it is an extension of an abelian variety
by a torus (e.g., a torus or an abelian variety).
Note that $\red{\overline{G}}^0$ is semi-abelian precisely when there is
no subgroup
$\Ga\hookrightarrow \overline{G}$~\cite[Lem.~4.1]{hallj_neeman_dary_no_compacts}. The \emph{affinization} of a group scheme $G$ is the affine group scheme $\spec \Gamma(G,\Orb_G)$, see \cite[III.3.8]{MR0302656}. 

Recall that the abelian category $\QCOH(B_kG)$ is naturally identified with the
category $\Rep_k(G)$ of $k$-linear, locally finite representations of $G$. An \emph{irreducible} $k$-representation of $G$ is
a simple object of the abelian category $\Rep_k(G)$.
There is a natural functor
\[
\Psi_{B_k G}\colon \DCAT(\Rep_k(G))=\DCAT(\QCOH(B_kG))\to \DQCOH(B_kG).
\]
When $G$ is affine and $\DQCOH(B_kG)$ is compactly generated, then
$\Psi_{B_kG}$ is an equivalence~\cite[Thm.~1.2]{hallj_neeman_dary_no_compacts}.
Conversely, if $G$ is affine and $\DQCOH(B_kG)$ is not compactly generated,
then $G$ is poor (Theorem~\ref{MT:compact-gen-of-BG}) and $\Psi_{B_kG}$ is not an
equivalence~\cite[Thm.~1.3]{hallj_neeman_dary_no_compacts}. If $G$ is not
affine, then $\Psi_{B_kG}$ is not even full on bounded objects.
Nonetheless, $\DQCOH(B_kG)$ remains preferable. For example, $\DQCOH(B_kG)$ is always left-complete, which is not true of $\DCAT(\QCOH(B_kG))$; see \cite{hallj_neeman_dary_no_compacts}.

By Theorem \ref{MT:compact-gen-of-BG}\itemref{MT:compact-gen-of-BG:irreps-fcd}, if $G$ is linearly reductive, then $\DQCOH(B_kG)$ is compactly generated by the finite-dimensional irreducible $k$-representations of $G$. Since $\Rep_k(G)$ is a semisimple abelian category, $\Rep_k(G)$ is generated by the finite-dimensional irreducible $k$-representations.

Theorem \ref{MT:compact-gen-of-BG}\itemref{MT:compact-gen-of-BG:irreps-fcd} also implies that $\DQCOH(B_kG)$ is
compactly generated by $\Orb_{B_kG}$ when $G$ is unipotent and $k$
has characteristic zero. We wish to point out, however, that the abelian category
$\Rep_k(G)$ is not generated by the trivial one-dimensional
representation \cite[Cor.~3.4]{2013arXiv1306.5418G}. This further emphasizes the benefits of the derived category $\DQCOH(B_kG)$ over the abelian category $\Rep_k(G)$.

Theorem \ref{MT:compact-gen-of-BG}\itemref{MT:compact-gen-of-BG:irreps-fcd} cannot be extended to the situation where $B_kG$ is not of finite cohomological dimension (e.g., it fails for $k=\bar{\F}_2$ and $G=(\Z/2\Z)_k$). To prove Theorem \ref{MT:compact-gen-of-BG}, we explicitly describe a set of generators (Remark \ref{rem:gens-of-BG}).
\subsection*{Finite cohomological dimension}
Let $X$ be a quasi-compact and quasi-separated algebraic stack. An object of $\DQCOH(X)$ is \emph{perfect} if it is smooth-locally isomorphic to a bounded complex of free $\Orb_X$-modules of finite rank. While every
compact object of $\DQCOH(X)$ is perfect \cite[Lem.~4.4 (1)]{perfect_complexes_stacks}, there exist non-compact perfect complexes (e.g., $\Orb_{X}$, where $X=B_{\F_2}(\Z/2\Z)$). The following, however, are equivalent \cite[Rem.~4.6]{perfect_complexes_stacks}:
\begin{itemize}
\item every perfect object of $\DQCOH(X)$ is compact;
\item the structure sheaf $\Orb_X$ is compact;
\item there exists an integer $d_0$ such that for every quasi-coherent
  sheaf $F$ on $X$, the cohomology groups $H^{d}(X,F)$ vanish for
  all $d>d_0$; and
\item the derived global section functor $\RDERF\Gamma\colon \DQCOH(X)\to
  \DCAT(\AB)$ commutes with small coproducts.
\end{itemize}
We say that the stack $X$ has \emph{finite cohomological dimension} when it satisfies any of the conditions above. 

In the relative situation, the cohomological dimension of a morphism
depends in a subtle way on the separation properties of the target
(see Remark \ref{rem:strange_cd}). For this reason, in
\cite{perfect_complexes_stacks}, we introduced the more robust
notion of a \emph{concentrated} morphism. In the absolute situation, these two notions coincide, and we will use them interchangeably. 

If $G$ is a group scheme over a field $k$, a basic question to
consider is when its classifying stack $B_kG$ is concentrated. In characteristic $p>0$, the presence of unipotent subgroups of $G$ (e.g., $\Z/p\Z$, $\Galpha_p$, or $\Ga$) is an immediate obstruction. This
rules out all non-affine group schemes and $\GL_n$, where $n>1$. In
characteristic zero, if $G$ is affine, then its classifying stack is
concentrated. It was surprising to us that in characteristic zero, there are
non-affine group schemes whose classifying stack is concentrated. This
follows from a recent result of Brion on the coherent cohomology of anti-affine
group schemes~\cite{MR3102962}. More precisely,
we have the following theorem.

\begin{maintheorem}\label{MT:fin-coh-dim-of-BG}
Let $k$ be a field, let $G$ be a group scheme of finite type over $k$
and let $\overline{G}=G\otimes_k \overline{k}$. Then
$B_kG$ is concentrated if and only if
\begin{enumerate}
\item $k$ has positive characteristic and $\overline{G}$ is affine and
  linearly reductive; or
\item $k$ has characteristic zero and $\overline{G}$ is affine; or
\item $k$ has characteristic zero and the anti-affine part
  $G_\ant$ of $\bar{G}$ is of the form $G_\ant=S\times_A E(A)$, where
  $A$ is an abelian variety, $S\to A$ is an extension by a torus and
  $E(A)\to A$ is the universal vector extension.
\end{enumerate}
\end{maintheorem}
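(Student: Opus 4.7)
The plan is to reduce to the case where $k$ is algebraically closed and $G$ is connected, and then split by characteristic, using Chevalley's structure theorem together with Brion's classification and coherent cohomology computation for anti-affine groups in characteristic zero \cite{MR3102962}.

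First, since $\spec\bar{k}\to\spec k$ is faithfully flat, $B_k G$ is concentrated iff $B_{\bar{k}}\bar{G}$ is, so I may assume $k=\bar{k}$. The exact sequence $1\to G^0\to G\to \pi_0(G)\to 1$ presents $B_k G\to B_k\pi_0(G)$ as a gerbe banded by $G^0$; via the Leray spectral sequence for this gerbe, $B_k G$ is concentrated iff both $B_k G^0$ and $B_k\pi_0(G)$ are. Since $\pi_0(G)$ is a constant finite group over $\bar k$, $B_k\pi_0(G)$ is concentrated precisely when $|\pi_0(G)|$ is prime to $\mathrm{char}\,k$, a standard finite-group cohomology fact. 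This reduces the problem to connected $G$, tracking that $\pi_0(G)$ must be linearly reductive throughout.

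In positive characteristic, if the connected group $G$ is affine but not linearly reductive then $G$ contains one of $\Ga$, $\Galpha_p$, or $(\Z/p\Z)_k$, and a direct computation (as in \cite{hallj_neeman_dary_no_compacts} for $\Ga$) shows each of their classifying stacks has infinite cohomological dimension. If $G$ is not affine, then by Chevalley's theorem $G$ surjects onto a positive-dimensional abelian variety $A$; the bar complex for $B_k A$ totalizes an unbounded tower of exterior algebras $\bigwedge^\bullet H^1(A^n,\Orb)$ and produces cohomology in arbitrarily high degree. These obstructions together force $G^0$ to be a torus and $\pi_0(G)$ linearly reductive, i.e., $\bar G$ affine and linearly reductive, which is case~(1); the converse is immediate since such $\bar G$ has $B_{\bar k}\bar G$ of cohomological dimension $0$.

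In characteristic zero, if the connected group $G$ is affine I would use a Levi decomposition $G=L\ltimes U$: $B_k L$ has cohomological dimension $0$ because $L$ is linearly reductive, and an induction on $\dim U$ via $1\to\Ga\to U\to U'\to 1$ together with the vanishing $H^i(B_k\Ga,V)=0$ for $i\geq 2$ (since $\mathrm{Lie}(\Ga)$ is the abelian one-dimensional Lie algebra) bounds the cohomological dimension of $B_k U$ by $\dim U$, yielding case~(2). For the non-affine case, consider the exact sequence $1\to G_\ant\to G\to G/G_\ant\to 1$ where $G_\ant$ is the (central) anti-affine part and the quotient is affine. By the gerbe argument and case~(2), $B_k G$ is concentrated iff $B_k G_\ant$ is. Since $G_\ant$ is commutative, $\RDERF\Gamma(B_k G_\ant,\Orb)$ is computed by the bar cosimplicial scheme $G_\ant^\bullet$, and invoking Brion's classification of anti-affine groups in characteristic zero together with his explicit computation of $H^*(G_\ant,\Orb_{G_\ant})$ identifies the condition for the resulting spectral sequence to have uniformly bounded cohomology as precisely $G_\ant=S\times_A E(A)$, which is case~(3). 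The main obstacle is exactly this last step: converting Brion's coherent cohomology computation for an anti-affine group into concentration of the classifying stack, and matching it with the geometric shape $G_\ant=S\times_A E(A)$. Controlling the bar spectral sequence carefully in both directions, and ruling out non-vanishing of high-degree coherent cohomology on $B_k G_\ant$ exactly when $G_\ant$ is not of this form, is where the technical core of the argument lies.
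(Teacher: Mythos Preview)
Your outline is broadly sound and several of your reductions match the paper's (base change to $\bar k$, reducing to the anti-affine part in characteristic zero via $1\to G_\ant\to G\to G/G_\ant\to 1$). However, the part you yourself flag as the ``main obstacle'' is a genuine gap, and the paper resolves it by a different and rather sharper device than the bar complex you propose.

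For an anti-affine $G$ in characteristic zero, the paper does \emph{not} analyze the bar cosimplicial object $G^\bullet$. Instead it uses the Leray spectral sequence for the composition $\spec k \xrightarrow{f} BG \xrightarrow{\pi} \spec k$. Two observations make this tractable. First, since $G$ is anti-affine, \emph{every} coherent sheaf on $BG$ is a trivial vector bundle; in particular $\RDERF^q f_*k$ is $\Orb_{BG}^{\oplus d_q}$, where by Brion's computation $d_q=\dim_k H^q(G,\Orb_G)=\binom{g-n}{q}$ (here $g=\dim A$ and $n=\dim U$ after killing the torus). Thus $E_2^{pq}=H^p(BG,\Orb_{BG})\otimes k^{d_q}$. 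Second, one uses an Euler-characteristic argument: if $BG$ had finite cohomological dimension, then $E_2$ would be a bounded bicomplex whose Euler characteristic must equal $\chi(E_\infty)=1$; but $\chi(E_2)$ factors as $\bigl(\sum_p(-1)^p\dim H^p(BG,\Orb_{BG})\bigr)\cdot\bigl(\sum_q(-1)^q d_q\bigr)$, and the second factor is $(1-1)^{g-n}=0$ whenever $g>n$. This contradiction rules out all anti-affine $G$ except those with $g=n$, i.e., $G=S\times_A E(A)$; and in that case $d_q=0$ for $q>0$, so the spectral sequence degenerates and $\cohdim(BG)=0$. Your bar-complex approach gives an $E_1$ page with terms $(\bigwedge^\ast W^\vee)^{\otimes p}$ via K\"unneth, and it is not at all clear how to extract a clean obstruction from that; the triviality of coherent sheaves on $BG_\ant$ and the Euler-characteristic trick are the missing ideas.

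Two smaller points. In positive characteristic your treatment of the non-affine case (``the bar complex for $B_kA$ totalizes an unbounded tower of exterior algebras'') is a sketch at best; the paper instead observes that if $BA$ had finite cohomological dimension then so would $BA[p]$, forcing the finite group scheme $A[p]$ to be of multiplicative type, which is impossible since its Cartier dual $A^\vee[p]$ is not \'etale. This is both shorter and rigorous. And in characteristic zero your Levi-decomposition argument for affine $G$ is correct but the paper's is simpler: embed $G\hookrightarrow\GL_n$ and use $\cohdim(BG)\leq \cohdim(\GL_n/G)<\infty$ since $\cohdim(B\GL_n)=0$.
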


Finally, from Theorem~\ref{MT:fin-coh-dim-of-BG} using stratifications and
approximation techniques, we obtain a criterion for a stack to be concentrated.

\begin{maintheorem}\label{MT:concentrated-stacks}
Let $X$ be a quasi-compact and quasi-separated algebraic stack.
Consider the following conditions:
\begin{enumerate}
\item $X$ is concentrated.
\item Every residual gerbe $\mathcal{G}$ of $X$ is concentrated.
\item For every point $x\colon \spec k\to X$, the stabilizer group scheme
  $G_x$ is as in Theorem~\ref{MT:fin-coh-dim-of-BG}.
\end{enumerate}
Then (1)$\implies$(2)$\iff$(3). If $X$ has affine stabilizer groups and either equal
characteristic or finitely presented inertia, then (3)$\implies$(1).
\end{maintheorem}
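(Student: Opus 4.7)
The plan is to handle the three implications separately. The equivalence (2)$\iff$(3) is essentially a pointwise translation of Theorem~\ref{MT:fin-coh-dim-of-BG}. Given a point $x\colon \spec k\to X$, the residual gerbe $\mathcal{G}$ at $x$ is a gerbe over $\spec k(x)$ banded by the stabilizer group scheme $G_x$, and its base change to $\overline{k(x)}$ is a disjoint union of copies of $B_{\overline{k(x)}}\overline{G}_x$. Since concentration is preserved and reflected by faithfully flat quasi-compact base change on the base field (both $\RDERF\Gamma$ and the formation of small coproducts commute with fpqc descent), $\mathcal{G}$ is concentrated if and only if $B_{\overline{k(x)}}\overline{G}_x$ is. By Theorem~\ref{MT:fin-coh-dim-of-BG}, this is in turn equivalent to $\overline{G}_x$ being of one of the three listed types, which is condition (3).

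For (1)$\Longrightarrow$(2), I would exploit that concentration is inherited by closed substacks: if $i\colon Z\hookrightarrow X$ is a closed immersion, then $i_*$ is exact and commutes with small coproducts on $\DQCOH$, so $\RDERF\Gamma(Z,-) = \RDERF\Gamma(X, i_*(-))$ commutes with small coproducts whenever $\RDERF\Gamma(X,-)$ does. The residual gerbe at a point is a monomorphism into $X$ realized as a filtered intersection of locally closed reduced substacks; concentration then passes through this cofiltered limit using the approximation techniques developed in~\cite{perfect_complexes_stacks}.

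The hardest implication is (3)$\Longrightarrow$(1) under the extra hypotheses. The plan is Noetherian approximation followed by a stratification. First reduce $X$ to being of finite presentation over $\spec\Z$ (or a prime field in the equal-characteristic case); this preserves the hypotheses on stabilizers and inertia. Then proceed by Noetherian induction on the topological support: it suffices to find a non-empty open substack $U\subseteq X$ that is concentrated, since the complement $Z$ is closed of strictly smaller support and the localization triangle
\[
\RDERF\underline{\Gamma}_Z(-) \to \mathrm{id} \to \RDERF j_* j^*(-)
\]
combined with concentration of $Z$ (by induction) and of $U$ yields concentration of $X$, using that $\RDERF\underline{\Gamma}_Z$ and $j_*$ along a concentrated open immersion preserve coproducts. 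To produce $U$, one uses that finitely presented inertia (or the structure theory of affine groups in equal characteristic) allows a stratification of $X$ so that the inertia becomes flat of locally constant fibre type on each stratum; rigidifying by the connected component of the inertia gives a morphism from the stratum to an algebraic space whose fibres are gerbes banded by groups of the type appearing in Theorem~\ref{MT:fin-coh-dim-of-BG}, and flat base change together with Theorem~\ref{MT:fin-coh-dim-of-BG} yield concentration of the stratum.

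The main obstacle is precisely this last step: propagating uniform cohomological bounds along the rigidification from the fibres to the stratum. This is where the assumptions \emph{affine stabilizers} and \emph{equal characteristic or finitely presented inertia} enter essentially---they exclude the non-affine case~(3) of Theorem~\ref{MT:fin-coh-dim-of-BG}, in which such uniform bounds need not hold in families, and guarantee that the classification of admissible stabilizers restricts to a short list (tori, finite linearly reductive, $\GL_n$-like groups in characteristic zero) for which Noetherian approximation and smooth base change produce the required global cohomological dimension.
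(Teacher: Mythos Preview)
Your treatment of (2)$\iff$(3) is correct and matches the paper. Your d\'evissage skeleton for (3)$\Longrightarrow$(1) via stratification and an open/closed induction (the paper packages this as Lemma~\ref{L:stratify_cohdim}) is also the right shape. The problem is the step you yourself flag as the main obstacle: you never actually show that a stratum is concentrated.

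Your proposed mechanism---``rigidify by the connected component of the inertia to get a morphism to an algebraic space with gerbe fibres, then use Theorem~\ref{MT:fin-coh-dim-of-BG} on the fibres''---does not close the gap. First, the connected component of the inertia is not in general a flat subgroup stack to which rigidification applies; producing such a subgroup already requires a stratification and approximation argument. Second, and more seriously, fibrewise concentration does not imply concentration of the total space: knowing that each $B_kG_x$ has finite cohomological dimension gives no uniform bound as $x$ varies, and flat base change only transports the bound in one direction. Your final paragraph gestures at this (``short list of groups \dots\ Noetherian approximation and smooth base change'') but this is precisely where the content lies, and it is not supplied.

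The paper's substitute for this step is Proposition~\ref{P:stabilizer-filtration}: rather than rigidifying and then trying to globalize a fibrewise bound, one produces a finitely presented filtration whose strata admit \emph{explicit global presentations}---a quasi-affine morphism to $B\GL_{n,\Z}$ in the affine-stabilizer case, or a ``locally nicely presented'' morphism (an iterated $H$- and $(\Gm)^n$-torsor over something quasi-affine) in the nice-stabilizer case. These presentations have finite cohomological dimension by inspection, since $B\GL_n$ (in characteristic zero), $BH$ for $|H|$ invertible, and $B(\Gm)^n$ all do. The construction uses approximation from the residual gerbe outward (\cite[Thm.~B.2]{MR2774654}, \cite[Thm.~C]{rydh-2009}) and is where the affine-stabilizer and equal-characteristic/finitely-presented-inertia hypotheses are actually consumed. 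This is the missing idea in your sketch.

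A minor point on (1)$\Longrightarrow$(2): your formulation via ``filtered intersection of locally closed substacks plus approximation'' is more delicate than necessary. The residual gerbe $\mathcal{G}_x\to X$ factors as an \emph{affine} morphism $\mathcal{G}_x\to Z$ (pullback of the generic point $\spec\kappa(x)\hookrightarrow\underline{Z}$, which is affine) followed by a locally closed immersion $Z\hookrightarrow X$; both steps preserve finite cohomological dimension directly, without any limit argument.
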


Theorem~\ref{MT:concentrated-stacks} generalizes a result of Drinfeld and
Gaitsgory~\cite[Thm.~1.4.2]{MR3037900}: in characteristic zero, every quasi-compact and quasi-separated algebraic stack with finitely presented inertia and
affine stabilizers is concentrated.
Our generalization is made possible by a recent approximation result of the second author \cite{rydh-2014}. 

As an application of Theorem~\ref{MT:concentrated-stacks} and
\cite[Thm.~C]{perfect_complexes_stacks}, we obtain the following variant of
\cite[Thm.~B]{perfect_complexes_stacks} in positive characteristic:
\begin{maintheorem}\label{MT:globaltype_pos-char}
  Let $X$ be an algebraic stack of equal characteristic.
  Suppose that there exists a faithfully flat,
  representable, separated and quasi-finite morphism $X'\to X$ of finite presentation such
  that $X'$ has the resolution property and affine linearly reductive
  stabilizers. Then the unbounded derived category $\DQCOH(X)$ is compactly
  generated by a countable set of perfect complexes. In particular, this
  holds for every stack $X$ of s-global type with linearly reductive
  stabilizers.
\end{maintheorem}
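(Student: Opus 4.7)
The plan is to apply the descent result \cite[Thm.~C]{perfect_complexes_stacks} along the cover $p\colon X'\to X$. Concretely, two ingredients must be assembled: (i) $X'$ is concentrated and $\DQCOH(X')$ is compactly generated by a countable set of perfect complexes, and (ii) $X$ is itself concentrated.

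For (i), I would first verify that $X'$ is concentrated. The stabilizers of $X'$ are affine and linearly reductive, so Theorem~\ref{MT:fin-coh-dim-of-BG}(1) in positive characteristic (and (2) in characteristic zero) applies to each residual gerbe of $X'$. Since $X'$ has equal characteristic and affine stabilizers, the implication $(3)\Rightarrow(1)$ in Theorem~\ref{MT:concentrated-stacks} then gives concentration of $X'$. A standard d\'evissage from the resolution property---writing every quasi-coherent sheaf as a filtered colimit of quotients of locally free sheaves of finite rank---produces the required countable family of perfect compact generators of $\DQCOH(X')$.

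For (ii), I would analyze $G_x$ at an arbitrary point $x\colon \spec k\to X$. Choose a lift $x'\colon \spec k'\to X'$ under the faithfully flat morphism $p$. Since $p$ is representable, separated and quasi-finite, the induced map $G_{x'}\to G_{x,k'}$ is a closed immersion of group schemes whose cokernel is quasi-finite over $k'$, hence finite; in particular $G_x$ is affine. In equal characteristic zero this immediately matches condition~(2) of Theorem~\ref{MT:fin-coh-dim-of-BG}, so Theorem~\ref{MT:concentrated-stacks} yields concentration of $X$. In positive characteristic one must additionally establish that $G_x$ is linearly reductive; this is the main technical obstacle, and is handled by a structural analysis of the extension $1\to G_{x'}\to G_x\to G_x/G_{x'}\to 1$ using Nagata's theorem, again invoking the equal characteristic hypothesis in Theorem~\ref{MT:concentrated-stacks}.

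With (i) and (ii) in hand, \cite[Thm.~C]{perfect_complexes_stacks} applied to $p$ directly yields that $\DQCOH(X)$ is compactly generated by a countable set of perfect complexes. For the ``in particular'' clause, by definition an s-global type stack admits a cover $p\colon X'\to X$ of exactly the required form with $X'$ carrying the resolution property; moreover, since $p$ is representable, each stabilizer of $X'$ is a closed subgroup of a stabilizer of $X$, and in equal positive characteristic a closed subgroup of an affine linearly reductive group is again linearly reductive (by Nagata: its connected component is of multiplicative type and its component group inherits order prime to the characteristic), so the stabilizer hypothesis on $X'$ is automatic.
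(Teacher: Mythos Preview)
Your step~(i) is correct and is essentially the paper's argument: Theorem~\ref{MT:concentrated-stacks} gives that $X'$ is concentrated, and combined with the resolution property this makes $X'$ $\aleph_0$-crisp (this is \cite[Ex.~8.9]{perfect_complexes_stacks}, which is what the paper cites rather than redoing the d\'evissage by hand).

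Step~(ii), however, is both unnecessary and false. It is unnecessary because \cite[Thm.~C]{perfect_complexes_stacks} does not require the target to be concentrated: it says that $\aleph_0$-crispness descends along quasi-finite faithfully flat morphisms of finite presentation, with no hypothesis on $X$ beyond what is already in the statement. The paper's proof is literally one line: $X'$ is $\aleph_0$-crisp, hence so is $X$.

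Step~(ii) is also false. Take $k$ of characteristic $p>0$, $X=B_k(\Z/p\Z)$, and $X'=\spec k$ with the universal torsor $X'\to X$. Then $X'\to X$ is finite \'etale, faithfully flat, representable, and $X'$ trivially has the resolution property and linearly reductive (indeed trivial) stabilizers; so all the hypotheses of the theorem hold. But the stabilizer of $X$ is $\Z/p\Z$, which is not linearly reductive, and $X$ is not concentrated (Theorem~\ref{T:Nagata}). Your proposed ``structural analysis of the extension $1\to G_{x'}\to G_x\to G_x/G_{x'}\to 1$'' cannot succeed: $G_{x'}$ need not be normal in $G_{x,k'}$, and even when it is (as in this example, where $G_{x'}$ is trivial), the quotient can have order divisible by~$p$. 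So drop step~(ii) entirely; once you have (i), \cite[Thm.~C]{perfect_complexes_stacks} finishes the proof.
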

\begin{proof}
  Argue exactly as in the proof of \cite[Thm.~B]{perfect_complexes_stacks} in
  \cite[\S 9]{perfect_complexes_stacks}: by
  \cite[Ex.~8.9]{perfect_complexes_stacks} and
  Theorem~\ref{MT:concentrated-stacks}, the stack $X'$ is $\aleph_0$-crisp,
  hence so is $X$ by \cite[Thm.~C]{perfect_complexes_stacks}.
\end{proof}
\subsection*{Acknowledgements}
It is our pleasure to thank Dan Petersen and Brian Conrad for fruitful
discussions concerning Section~\ref{S:coh-dim-BG}. We would also like
to thank Amnon Neeman for his encouragement with Appendix~\ref{A:duality}. 
Finally, we would like to thank the anonymous referee for a number of supportive comments. 
\section{Cohomological dimension of classifying stacks}\label{S:coh-dim-BG}
Let $G$ be a group scheme of finite type over a field $k$. In this section,
we give a complete classification of the groups $G$ such that $BG$ has finite
cohomological dimension (Theorem~\ref{MT:fin-coh-dim-of-BG}). In positive
characteristic, these are the linearly reductive groups
(Theorem~\ref{T:Nagata}). In characteristic zero, these are the affine groups
as well as certain groups built up from the universal vector extension of
an abelian variety (Theorem~\ref{T:conc-BG-char-zero}).

\begin{definition}
Let $G$ be an \emph{affine} group scheme over a field $k$ of characteristic~$p$.
We say that $G$ is
\begin{itemize}
\item \emph{nice} if the connected component of the identity $G^0$ is of
  multiplicative type
  and the number of geometric components of $G$ is not divisible by $p$; or
\item \emph{reductive} if the unipotent radical of $G_{\overline{k}}$ is
  trivial ($G$ not necessarily connected); or
\item \emph{linearly reductive} if every finite dimensional representation
  of $G$ is semi-simple, or equivalently, if $BG\to \spec k$ has cohomological
  dimension zero.
\end{itemize}
\end{definition}

Note that subgroups, quotients and extensions of nice group schemes are
nice. Indeed, this follows from the corresponding fact for connected
group schemes of multiplicative type~\cite[Exp.~IX, Props.~8.1,
  8.2]{MR0274459}.
Also note that if $G$ is nice, then $G^0$ is a twisted form of
$(\Gm)^n\times \Gmu_{p^{r_1}}\times\dots\times \Gmu_{p^{r_m}}$ for some
tuple of natural numbers $n,r_1,r_2,\dots,r_m$.

If $G$ is a group scheme of finite type over a field $k$, then there
is always a smallest normal subgroup scheme $G_\ant$ such that
$G/G_\ant$ is affine. The subgroup $G_\ant$ is anti-affine, that is, $\Gamma(G_\ant,\Orb_{G_\ant})=k$. Anti-affine groups
are always smooth, connected and commutative. Their structure has also been
described by Brion~\cite{MR2488561}.

In positive characteristic, we have the following result, which is classical
when $G$ is smooth and affine.

\begin{theorem}[Nagata's theorem]\label{T:Nagata}
Let $G$ be a group scheme of finite type over a field $k$. Consider the
following conditions:
\begin{enumerate}
\item $G$ is nice.\label{TI:Nagata:nice}
\item $G$ is affine and linearly reductive.\label{TI:Nagata:lin-red}
\item $BG$ has cohomological dimension $0$.\label{TI:Nagata:coh-dim-0}
\item $BG$ has finite cohomological dimension.\label{TI:Nagata:coh-dim-fin}
\end{enumerate}
Then \itemref{TI:Nagata:nice}$\implies$\itemref{TI:Nagata:lin-red}$\implies$\itemref{TI:Nagata:coh-dim-0}$\implies$\itemref{TI:Nagata:coh-dim-fin}.
If $k$ has positive
characteristic, then all four conditions are equivalent.
\end{theorem}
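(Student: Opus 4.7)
The implications $(1)\Rightarrow(2)\Rightarrow(3)\Rightarrow(4)$ are essentially formal. If $G$ is nice, then $G^0$ is of multiplicative type and $G/G^0$ is finite étale of order prime to $p$; the former is linearly reductive because representations of groups of multiplicative type are semisimple, the latter by Maschke's theorem, and their extension is linearly reductive via the Hochschild--Serre spectral sequence. The implication $(2)\Rightarrow(3)$ is built into the definition, and $(3)\Rightarrow(4)$ is trivial.

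The content is the implication $(4)\Rightarrow(1)$ in characteristic $p>0$, which I would prove in two parts. First, I would show that $G$ must be affine. The key tool is that for any closed subgroup $H\subseteq G$ with $G/H$ affine, the induction functor $\mathrm{Ind}_H^G\colon \Rep_k(H)\to\Rep_k(G)$ is exact, since its higher derived functors compute the cohomology on the affine scheme $G/H$ of the associated bundle of a representation $V$. By derived Frobenius reciprocity, $R\Gamma(BH,V)\simeq R\Gamma(BG,\mathrm{Ind}_H^G V)$, so finite cohomological dimension descends from $BG$ to $BH$. Applying this with $H=G_{\ant}$ (for which $G/G_{\ant}$ is affine by construction) shows that $BG_{\ant}$ has finite cohomological dimension. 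In positive characteristic every anti-affine group is semi-abelian, so there is an extension $1\to T\to G_{\ant}\to A\to 1$ with $T$ a torus and $A$ an abelian variety; since $T$ is linearly reductive, the classifying gerbe $BG_{\ant}\to BA$ yields $R\Gamma(BG_{\ant},\Orb)\simeq R\Gamma(BA,\Orb)$. For a nontrivial abelian variety the latter is unbounded: its bar resolution produces a spectral sequence whose $E_2$-page is a symmetric algebra on $H^1(A,\Orb)^\vee[-2]$, and $H^1(A,\Orb)\neq 0$ forces cohomology in arbitrarily high degree. Hence $A=0$, so $G_{\ant}$ is an anti-affine torus and therefore trivial, and $G$ is affine.

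For the second part, with $G$ affine, I would invoke the classical Nagata--Demazure--Gabriel classification of linearly reductive affine group schemes of finite type. The same exactness-of-induction argument, now applied with $H=G^0$ (since $G/G^0$ is finite étale, hence affine) and then inside $G^0$ with $H=\red{G^0}$ (whose quotient is infinitesimal, hence affine), reduces matters to two sub-cases: (a) the classical smooth-connected Nagata theorem, asserting that a smooth connected affine group in characteristic $p>0$ with classifying stack of finite cohomological dimension is a torus; and (b) ruling out nontrivial infinitesimal thickenings of a torus, by showing that any such thickening contains $\Galpha_p$ as a subgroup (detected by a non-toral element in its restricted Lie algebra) and that $B\Galpha_p$ has unbounded cohomology. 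The component-group condition $p\nmid|\pi_0(\bar{G})|$ follows by splitting the connected--étale sequence after base change to $\bar{k}$ and observing that $B(\Z/p\Z)$ has unbounded cohomology over any field of characteristic $p$. The main obstacle, beyond citing classical results for smooth affine groups, is assembling the cohomology non-vanishing statements for $BA$, $B\Galpha_p$ and $B(\Z/p\Z)$---fortunately each of these follows from explicit simplicial or bar resolutions, and the abelian-variety case is essentially encoded in the Brion computations referenced in the introduction.
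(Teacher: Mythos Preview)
Your overall strategy---descend finite cohomological dimension to subgroups with affine quotient, then rule out $\Galpha_p$, $\Z/p\Z$, and nontrivial abelian quotients---matches the paper's, but the execution differs and two steps need more care. For the non-affine case, the paper does not compute $H^*(BA,\Orb_{BA})$ at all: instead it notes that $A[p]\subset A$ is a finite subgroup of degree $p^{2g}$ which inherits finite cohomological dimension, so by the affine case already established $A[p]$ must be of multiplicative type; but its Cartier dual is $A^\vee[p]$, which is not \'etale---contradiction. Your bar-spectral-sequence claim (that the $E_2$-page is a symmetric algebra on $H^1(A,\Orb_A)^\vee[-2]$) is precisely what the paper says it can only \emph{expect} in positive characteristic and has verified only for elliptic curves; you would need a separate argument, for instance the Euler-characteristic trick the paper uses for anti-affine groups in characteristic zero, which does go through here.

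For the affine case, the paper avoids your reduction to $\red{G^0}$ by citing directly Demazure--Gabriel's lemma that a connected affine group scheme with no $\Galpha_p$-subgroup is of multiplicative type. Your appeal to ``the classical smooth-connected Nagata theorem'' for finite (rather than zero) cohomological dimension is not quite classical---you still need that a smooth connected affine non-torus contains $\Ga$, hence $\Galpha_p$. More seriously, your component-group step (``splitting the connected--\'etale sequence'') is not automatic: the extension $1\to G^0\to G\to \pi_0(G)\to 1$ need not split as group schemes even over $\bar k$, and producing a $\Z/p\Z$-subgroup requires the non-trivial input that $G^0(\bar k)$ is uniquely $p$-divisible in characteristic~$p$ (so that the relevant $H^2$ of a $p$-Sylow vanishes). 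The paper's route here is cleaner: once $G^0$ is of multiplicative type, $\cohdim(BG^0)=0$ gives $\cohdim(BG)=\cohdim(B\pi_0(G))$, and finiteness of the latter forces $p\nmid|\pi_0(G)|$ directly.
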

\begin{proof}
  First, recall that group schemes of multiplicative type are linearly reductive. Moreover, a finite \'etale group scheme is linearly reductive if and only if
  the number of geometric components is prime to the characteristic $p$ (by
  Maschke's Lemma and the fact that $\Z/p\Z$ is not linearly reductive).

  \itemref{TI:Nagata:nice}$\implies$\itemref{TI:Nagata:lin-red}: if $G$ is nice, then $G^0$ and $\pi_0(G)=G/G^0$ are linearly
  reductive group schemes; thus, so is $G$ (Lemma~\ref{lem:jarod}\itemref{lem:jarod:ext}).

  \itemref{TI:Nagata:lin-red}$\implies$\itemref{TI:Nagata:coh-dim-0}: that an affine group
  scheme $G$ is linearly reductive if and only if the classifying stack
  $BG$ has cohomological dimension
  $0$ is well-known.

  Now, suppose that $k$ has positive characteristic.
  That \itemref{TI:Nagata:lin-red}$\implies$\itemref{TI:Nagata:nice}
  when $G$ is smooth is Nagata's
  theorem~\cite{MR0142667}.
  That \itemref{TI:Nagata:lin-red}$\implies$\itemref{TI:Nagata:nice}
  in general is proved
  in~\cite[IV, \S3, Thm.~3.6]{MR0302656}. Let us briefly indicate how a
  similar argument proves that
  \itemref{TI:Nagata:coh-dim-fin}$\implies$\itemref{TI:Nagata:nice}.
  Assume that $BG$ has finite cohomological dimension.
  Then the same is true of $BH$ for every subgroup $H$ of $G$. In particular,
  there cannot be any subgroups of $G$ isomorphic to $\Z/p\Z$ or $\Galpha_p$.

  For the moment, assume that $G$ is affine. If $G$ is connected,
  then $G$ is of multiplicative type since $G$ has no subgroups
  isomorphic to $\Galpha_p$~\cite[IV, \S3, Lem.~3.7]{MR0302656}.
  If $G$ is disconnected, then the
  connected component $G^0$ has finite cohomological dimension and is
  thus of multiplicative type by the previous case. It follows that
  $\pi_0(G)$ has finite cohomological dimension
  (Lemma~\ref{lem:jarod}\itemref{lem:jarod:quot}).
  In particular, the rank has to be prime to $p$; hence $G$ is nice.

  Finally, suppose that $G$ is not affine. Since we are in positive
  characteristic, $G_\ant$ is semi-abelian, i.e., the extension of an abelian
  variety $A$ by a torus $T$~\cite[Prop.~2.2]{MR2488561}. In particular, the classifying
  stack $BA$ has finite cohomological dimension. Indeed, $A=G_\ant/T$ and
  $BT$ has cohomological dimension zero; then apply 
  Lemma~\ref{lem:jarod}\itemref{lem:jarod:quot}. The subgroup scheme
  $A[p]\subset A$ of $p$-torsion points is finite of degree $p^{2g}$, where $g$
  is the dimension of $A$. By assumption, $A[p]$ has finite cohomological
  dimension, so $A[p]$ is of multiplicative type. But this is
  impossible: the Cartier dual is $A^\vee[p]$, which is not \'etale.
\end{proof}
Let $f\colon X \to Y$ be a quasi-compact and quasi-separated morphism of algebraic stacks. Define $\cohdim(f)$, the \emph{cohomological dimension} of $f$, to be the least non-negative integer $n$ such that $\RDERF^d f_*M = 0$ for every $d>n$ and quasi-coherent sheaf $M$ on $X$. If no such $n$ exists, then we set $n=\infty$. We define the cohomological dimension of an algebraic stack $X$, $\cohdim(X)$, to be the non-negative integer $\cohdim(X \to \spec \Z)$. 

The lemma that follows is a simple refinement of \cite[Prop.~12.17]{2008arXiv0804.2242A}.
\begin{lemma}\label{lem:jarod}
Let $H \hookrightarrow G$ be an inclusion of group schemes of finite type over a field $k$ with quotient $Q$.
\begin{enumerate}
\item\label{lem:jarod:sub}
  Then $\cohdim(BH)\leq \cohdim(BG)+\cohdim(Q)$.
\setcounter{saveenum}{\value{enumi}}
\end{enumerate}
In addition, if $H$ is a normal subgroup scheme of $G$, then $Q$ is a group scheme of finite type over $k$ and the following holds:
\begin{enumerate}
  \setcounter{enumi}{\value{saveenum}}
\item\label{lem:jarod:ext}
  $\cohdim(BG)\leq \cohdim(BH)+\cohdim(BQ)$; and
\item\label{lem:jarod:quot}
  if $\cohdim(BH)=0$, then $\cohdim(BG)=\cohdim(BQ)$.
\end{enumerate}
\end{lemma}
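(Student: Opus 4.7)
The plan is to exploit the Leray spectral sequences associated to the two natural morphisms between classifying stacks. The inclusion $H\hookrightarrow G$ induces a representable morphism $q\colon BH\to BG$; its $2$-fibre over the atlas $\spec k\to BG$ is the quotient $Q=G/H$. When $H$ is normal in $G$, the group surjection $G\to Q$ induces a morphism $r\colon BG\to BQ$ whose fibre over $\spec k\to BQ$ is $BH$. In both cases, flat base change identifies the higher direct images of a quasi-coherent sheaf with cohomology along the corresponding fibre.

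For \itemref{lem:jarod:sub}, I would apply flat base change to $q$: for a quasi-coherent sheaf $F$ on $BH$, the pullback of $\RDERF^iq_*F$ to the atlas $\spec k\to BG$ is $H^i(Q,F|_Q)$. Hence $\RDERF^iq_*F=0$ for $i>\cohdim(Q)$, and the Leray spectral sequence
\[E_2^{p,q}=H^p(BG,\RDERF^qq_*F)\Rightarrow H^{p+q}(BH,F)\]
is concentrated in the range $p\leq\cohdim(BG)$, $q\leq\cohdim(Q)$, yielding the desired bound. A symmetric argument applied to $r\colon BG\to BQ$---whose higher direct images of quasi-coherent sheaves are controlled by $\cohdim(BH)$ via flat base change---proves \itemref{lem:jarod:ext}.

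For \itemref{lem:jarod:quot}, the inequality $\cohdim(BG)\leq\cohdim(BQ)$ is immediate from \itemref{lem:jarod:ext} when $\cohdim(BH)=0$. For the reverse inequality, I observe that flat base change, combined with $H^0(BH,\Orb_{BH})=k$ and the vanishing of higher cohomology, gives $\RDERF r_*\Orb_{BG}=\Orb_{BQ}$. The projection formula for the concentrated morphism $r$ then yields $\RDERF r_*r^*F'\cong F'$ for every quasi-coherent $F'$ on $BQ$, so $H^n(BQ,F')=H^n(BG,r^*F')$ and thus $\cohdim(BQ)\leq\cohdim(BG)$. The hard part will be invoking flat base change and the projection formula in this derived-categorical setting, but these are standard facts for concentrated morphisms of qcqs algebraic stacks, as established in \cite{perfect_complexes_stacks}.
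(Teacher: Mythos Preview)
Your proposal is correct and follows essentially the same approach as the paper. The paper packages parts \itemref{lem:jarod:sub} and \itemref{lem:jarod:ext} by citing \cite[Lem.~2.2(2),(4)]{perfect_complexes_stacks} rather than writing out the Leray spectral sequence, but the content is identical: a composition-of-functors bound on cohomological dimension together with flat base change along the atlas to control the fibrewise term. For \itemref{lem:jarod:quot}, the paper argues via the underived adjunction unit $\ID{BQ}\to j_*j^*$ being an isomorphism together with $\cohdim(j)=0$, whereas you use $\RDERF r_*\Orb_{BG}\simeq\Orb_{BQ}$ plus the projection formula; these are equivalent repackagings of the same computation, and both rely on the same facts from \cite{perfect_complexes_stacks}.
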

\begin{proof}
Let $i\colon BH\to BG$ denote the induced morphism.
For \itemref{lem:jarod:sub}, by \cite[Lem.~2.2(4)]{perfect_complexes_stacks},
$\cohdim(BH) \leq \cohdim(BG) + \cohdim(i)$. Also, the pull-back of $i$ along
the universal $G$-torsor is $Q \to \spec k$. By
\cite[Lem.~2.2(2)]{perfect_complexes_stacks}, $\cohdim(i) \leq \cohdim(Q)$; the
claim follows.

For \itemref{lem:jarod:ext}, by \cite[Lem.~2.2(4)]{perfect_complexes_stacks},
$\cohdim(BG) \leq \cohdim(BQ) + \cohdim(j)$, where $j\colon BG\to BQ$ is the
induced morphism. Since $BH\to \spec k$ is a pull-back of $j$, it follows that
$\cohdim(j)\leq\cohdim(BH)$ \cite[Lem.~2.2(2)]{perfect_complexes_stacks}; the
claim follows.

For \itemref{lem:jarod:quot}, by \itemref{lem:jarod:ext}, we know that
$\cohdim(BG) \leq \cohdim(BQ)$. The reverse inequality follows from the
observation that the underived adjunction map $\ID{BQ}\to j_*j^*$ is an isomorphism  and
$\cohdim(j) = 0$.
\end{proof}

In characteristic zero, we have the following result.

\begin{theorem}\label{T:conc-BG-char-zero}
Let $G$ be a group scheme of finite type over a field $k$ of
characteristic zero. Then $BG$ has finite cohomological dimension if and only
if
\begin{enumerate}
\item $G$ is affine, i.e., $G_\ant$ is trivial; or
\item $G_\ant$ is of the form $G_\ant=S\times_A E(A)$, where
  $S$ is the extension of an abelian variety $A$ by a torus and $E(A)$ is
  the universal vector extension of $A$.
\end{enumerate}
\end{theorem}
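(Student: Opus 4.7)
The plan is to reduce to the anti-affine part of $G$ and then apply Brion's structural and cohomological results on anti-affine groups~\cite{MR2488561, MR3102962}. First I would reduce to $G$ anti-affine. Let $Q = G/G_\ant$, an affine group scheme of finite type. Lemma~\ref{lem:jarod}\itemref{lem:jarod:sub} applied to $G_\ant \hookrightarrow G$ gives $\cohdim(BG_\ant) \leq \cohdim(BG) + \cohdim(Q) = \cohdim(BG)$, since $Q$ is an affine scheme so $\cohdim(Q) = 0$. Lemma~\ref{lem:jarod}\itemref{lem:jarod:ext} applied to the normal inclusion $G_\ant \triangleleft G$ gives $\cohdim(BG) \leq \cohdim(BG_\ant) + \cohdim(BQ)$, and $\cohdim(BQ) < \infty$ by the Drinfeld--Gaitsgory theorem~\cite[Thm.~1.4.2]{MR3037900} recalled in the introduction. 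Hence $\cohdim(BG) < \infty$ iff $\cohdim(BG_\ant) < \infty$. If $G_\ant = 1$ then $G$ is affine and case~(1) holds, so I may assume $G = G_\ant$ is nontrivial anti-affine.

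By Brion's structure theorem~\cite{MR2488561}, such a $G_\ant$ is commutative, smooth, connected, and fits in a canonical exact sequence $1 \to T \times V \to G_\ant \to A \to 1$ with $A$ an abelian variety, $T$ a torus, and $V$ a vector group; anti-affineness forces the vector extension class $\phi\colon V^\vee \to H^1(A, \Orb_A)$ to be injective. Setting $S = G_\ant/V$ (a semi-abelian extension of $A$ by $T$) and $E_V(A) = G_\ant/T$ (the induced vector extension of $A$ by $V$), we have $G_\ant = S \times_A E_V(A)$; condition~(2) of the theorem amounts to $\phi$ being surjective, i.e., $E_V(A) = E(A)$, the universal vector extension.

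It remains to prove $\cohdim(BG_\ant) < \infty$ iff $\phi$ is an isomorphism. Since $\Gamma(G_\ant, \Orb_{G_\ant}) = k$, the abelian category $\QCOH(BG_\ant)$ consists of $k$-vector spaces with the trivial comodule structure; hence, by a projection-formula argument, $\cohdim(BG_\ant)$ is determined by $\RDERF\Gamma(BG_\ant, \Orb_{BG_\ant})$. If $\phi$ is an isomorphism, Brion's computation~\cite{MR3102962} gives $H^i(G_\ant^n, \Orb) = 0$ for $i > 0$ and $H^0(G_\ant^n, \Orb) = k$ for all $n$ (via K\"unneth and $H^*(G_\ant, \Orb) = \Lambda^*(\coker\phi) = k$). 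The bar/descent spectral sequence $E_1^{p,q} = H^q(G_\ant^p, \Orb) \Rightarrow H^{p+q}(BG_\ant, \Orb)$ then collapses onto the row $q=0$, which is the cosimplicial complex with identity face maps; its associated cochain complex has alternating differentials $0, 1, 0, 1, \dots$, yielding $H^*(BG_\ant, \Orb) = k$ in degree zero and $\cohdim(BG_\ant) = 0$. If instead $\phi$ is not surjective, $H^*(G_\ant, \Orb) = \Lambda^*(\coker\phi)$ is nontrivial in positive degree, and I would invoke Brion's explicit description of $H^*(BG_\ant, \Orb)$ as essentially a symmetric algebra on $\coker\phi$ placed in positive degree to conclude it is unbounded, giving $\cohdim(BG_\ant) = \infty$. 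The main obstacle is precisely this last step: ruling out that higher differentials in the spectral sequence could collapse all positive-degree classes when $\coker\phi \neq 0$, which is exactly what Brion's explicit calculation prevents.
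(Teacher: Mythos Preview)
Your reduction to the anti-affine part is correct and matches the paper's, with the minor variation that you cite Drinfeld--Gaitsgory for $\cohdim(BQ)<\infty$ whereas the paper argues directly via an embedding $Q\hookrightarrow\GL_n$ and the fibration $BQ\to B\GL_n$. Your treatment of the case $\phi$ surjective via the bar spectral sequence is also fine; the paper uses a different spectral sequence (Leray for $\spec k\to BG\to\spec k$), but in that direction both approaches work.

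The genuine gap is in the direction ``$\phi$ not surjective $\Rightarrow$ $\cohdim(BG_\ant)=\infty$''. You propose to invoke ``Brion's explicit description of $H^*(BG_\ant,\Orb)$ as essentially a symmetric algebra on $\coker\phi$'', but Brion~\cite{MR3102962} computes $H^*(G,\Orb_G)=\bigwedge^*(W^\vee)$, the cohomology of the \emph{group}, not of its classifying stack. The symmetric-algebra formula for $H^*(BG,\Orb_{BG})$ is established only in this paper (in a Remark following Proposition~\ref{prop:coh-dim-of-anti-affine}), by a separate induction; it is not available to you as a black box, and you yourself flag controlling the differentials as the ``main obstacle''.

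The paper avoids computing $H^*(BG)$ altogether. After first quotienting by the torus $T$ (Lemma~\ref{lem:jarod}\itemref{lem:jarod:quot}) to reduce to a vector extension $E$ of $A$, it runs the Leray spectral sequence for $\spec k\xrightarrow{f}BE\xrightarrow{\pi}\spec k$, which converges to the cohomology of a \emph{point}. Since every coherent sheaf on $BE$ is trivial, one has $E_2^{pq}=H^p(BE,\Orb)\otimes k^{d_q}$ with $d_q=\binom{g-n}{q}$. Were $\cohdim(BE)$ finite, the $E_2$-page would be bounded with Euler characteristic $\chi(H^*(BE,\Orb))\cdot\sum_q(-1)^q d_q=0$ (as $g-n>0$), contradicting $\chi(E_\infty)=1$. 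This Euler-characteristic trick is exactly what dissolves your obstacle without ever analyzing the differentials or computing $H^*(BG)$.
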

\begin{proof}
By Lemma~\ref{lem:jarod}\itemref{lem:jarod:sub}--\itemref{lem:jarod:ext}, it is
enough to treat the cases where $G$ is either affine or anti-affine. If $G$ is
affine, then $G$ is a closed subgroup of $\GL_n$ for some~$n$. The induced
morphism $BG\to B\GL_n$ is a $\GL_n/G$-fibration. Since $\cohdim(B\GL_n)=0$ in
characteristic zero, it follows that $\cohdim(BG)\leq \cohdim(\GL_n/G)$ which is
finite. In the anti-affine case, the result follows from Proposition \ref{prop:coh-dim-of-anti-affine}. 
\end{proof}

\begin{proposition}\label{prop:coh-dim-of-anti-affine}
Let $G$ be a non-trivial anti-affine group scheme of finite type over a
field~$k$. If $k$
has characteristic zero and $G=S\times_A E(A)$, then $BG$ has cohomological
dimension zero. If not, then $BG$ has infinite cohomological dimension.
\end{proposition}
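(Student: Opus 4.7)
The plan is to first reduce to $k$ algebraically closed, since cohomological dimension of the structure map of a quasi-compact quasi-separated stack is preserved under flat base change. By Brion's structure theorem for anti-affine groups \cite{MR2488561}, $G$ is smooth, connected, and commutative, and fits into an extension $0 \to G_{\mathrm{aff}} \to G \to A \to 0$ with $A$ an abelian variety; in positive characteristic $G_{\mathrm{aff}}$ is a torus $T$, and in characteristic zero $G_{\mathrm{aff}} \cong T \times V$ with $T$ a torus and $V$ a vector group. Let $T \subseteq G_{\mathrm{aff}}$ be the torus part. Since $T$ is central with $\cohdim(BT) = 0$, Lemma \ref{lem:jarod}\itemref{lem:jarod:quot} reduces the computation to $\cohdim(B(G/T))$. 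A direct check shows that $G = S \times_A E(A)$ if and only if $G/T = E(A)$, since $(S \times_A E(A))/T = (S/T) \times_A E(A) = A \times_A E(A) = E(A)$.

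In positive characteristic, $G/T = A$ is a non-trivial abelian variety: if $A = 0$ then $G = T$ would be affine, contradicting the anti-affineness of the non-trivial $G$. For the $p$-torsion inclusion $A[p] \hookrightarrow A$ with quotient $A/A[p] \cong A$ via multiplication by $p$, Lemma \ref{lem:jarod}\itemref{lem:jarod:sub} yields $\cohdim(BA[p]) \leq \cohdim(BA) + \cohdim(A)$. The Cartier dual $A[p]^\vee \cong A^\vee[p]$ is not \'etale in characteristic $p$, so $A[p]$ is not of multiplicative type; by Theorem \ref{T:Nagata}, $\cohdim(BA[p]) = \infty$. Since $\cohdim(A) \leq \dim A < \infty$, we conclude $\cohdim(BA) = \infty$ and hence $\cohdim(BG) = \infty$.

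In characteristic zero, $G/T$ is a vector extension $0 \to V \to \tilde{A} \to A \to 0$, classified by an inclusion $V^* \hookrightarrow H^1(A, \mathcal{O}_A)$; the universal vector extension $E(A)$ corresponds to $V = V(A) := H^1(A, \mathcal{O}_A)^*$. By Brion's computation \cite{MR3102962} of the coherent cohomology of anti-affine groups, $R\Gamma(\tilde{A}, \mathcal{O}_{\tilde{A}}) = k$ in degree zero precisely when $\tilde{A} = E(A)$; otherwise it is unbounded. In the universal case, a flat base-change argument along the atlas $\pi \colon \spec k \to BE(A)$ gives $R\pi_* \mathcal{O}_{\spec k} \cong \mathcal{O}_{BE(A)}$, and the projection formula then yields $R\pi_* \pi^* F \simeq F$ for every quasi-coherent $F$ on $BE(A)$. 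Consequently $R\Gamma(BE(A), F) = R\Gamma(\spec k, \pi^* F) = \pi^* F$ is concentrated in degree zero, so $\cohdim(BE(A)) = 0$. In the non-universal case, the \v{C}ech/bar spectral sequence for $\spec k \to B\tilde{A}$ propagates the unboundedness of $R\Gamma(\tilde{A}, \mathcal{O}_{\tilde{A}})$ to $R\Gamma(B\tilde{A}, \mathcal{O}_{B\tilde{A}})$, forcing $\cohdim(B\tilde{A}) = \infty$.

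The main obstacle is the ``$\cohdim = 0$'' direction in the universal case: one must upgrade the vanishing $H^{>0}(BE(A), \mathcal{O}_{BE(A)}) = 0$ to the same vanishing for every quasi-coherent sheaf. The projection-formula argument above is the essential step, and it relies crucially on Brion's acyclicity $R\Gamma(E(A), \mathcal{O}_{E(A)}) = k$ together with the flatness of the atlas $\pi$.
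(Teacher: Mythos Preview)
Your reductions (to $\bar{k}$, and from $G$ to $G/T$ via Lemma~\ref{lem:jarod}\itemref{lem:jarod:quot}) and the positive-characteristic argument are correct and match the paper. Your projection-formula argument for $\cohdim(BE(A))=0$ is also correct, and close in spirit to the paper's Leray spectral sequence argument; both rest on Brion's acyclicity $R\Gamma(E(A),\Orb_{E(A)})=k$.

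The gap is in the characteristic-zero case $\tilde{A}\neq E(A)$. Your claim that $R\Gamma(\tilde{A},\Orb_{\tilde{A}})$ is ``unbounded'' is simply false: Brion's computation gives $H^*(\tilde{A},\Orb_{\tilde{A}})\cong \bigwedge^*(W^\vee)$ for a $k$-vector space $W$ of dimension $g-n>0$, which is a \emph{bounded} exterior algebra (it vanishes above degree $g-n$). What is true is only that this cohomology is non-trivial in some positive degree. And the assertion that the bar spectral sequence ``propagates'' this to unboundedness of $R\Gamma(B\tilde{A},\Orb_{B\tilde{A}})$ is not an argument: the $E_1$-page is unbounded for trivial reasons, and one must rule out cancellation by the differentials. (It is in fact true that $H^{2d}(B\tilde{A},\Orb_{B\tilde{A}})\cong \mathrm{Sym}^d(W^\vee)$, so the cohomology \emph{is} unbounded---but establishing this takes real work, and the paper relegates it to a separate remark after the proof rather than relying on it.)

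The paper's device for this case is an Euler-characteristic argument. Since $\tilde{A}$ is anti-affine, every coherent sheaf on $B\tilde{A}$ is a trivial bundle; hence the Leray spectral sequence for $\spec k \xrightarrow{f} B\tilde{A} \to \spec k$ has $E_2^{pq}=H^p(B\tilde{A},\Orb_{B\tilde{A}})\otimes_k k^{d_q}$ with $d_q=\binom{g-n}{q}$, converging to $H^{p+q}(\spec k,k)$. If $B\tilde{A}$ had finite cohomological dimension, then $E_2$ would be bounded and its Euler characteristic would equal that of $E_\infty$. But
\[
\chi(E_2)=\chi\bigl(H^*(B\tilde{A},\Orb_{B\tilde{A}})\bigr)\cdot\sum_q(-1)^q\binom{g-n}{q}=0
\]
since $g-n>0$, whereas $\chi(E_\infty)=1$; contradiction. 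This is the missing idea in your proposal.
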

\begin{proof}
We have already seen that $BG$ has infinite cohomological dimension in positive
characteristic, so we may assume henceforth that $k$ has characteristic zero.  

By
Chevalley's Theorem \cite[Thm.~1.1]{MR1906417}, $G$ is an extension of an abelian variety $A$ by an
affine connected group scheme $G_\aff$. Since $G$ is commutative,
$G_\aff=T\times U$, where $T$ is a torus and $U$ is
connected, unipotent and commutative; in particular, $U\cong (\Ga)^n$ for some $n$.
Moreover, both the semi-abelian variety $S=G/U$ and the vector extension
$E=G/T$ are anti-affine, and $G=S\times_A E$~\cite[Prop.~2.5]{MR2488561}.
Since $T$ is linearly reductive, the cohomological dimension of
$B(G/T)$ equals the cohomological dimension of $BG$ (Lemma \ref{lem:jarod}\itemref{lem:jarod:quot}). We may thus assume that
$T=0$, so that $G=E$ is an extension of $A$ by $U$.
Let $g$ be the dimension of $A$ and let $n$ be the dimension of $U$.

Brion has calculated the coherent cohomology of $G$~\cite[Prop.~4.3]{MR3102962}:
\[
H^*(G,\Orb_G)=\textstyle \bigwedge^*(W^\vee),
\]
where $W\subseteq H^1(A,\Orb_A)^\vee$ is a $k$-vector space of dimension $g-n$.
If $g=n$, then $G$ equals the
universal vector extension $E(A)$ and $G$ has no non-trivial cohomology.

We now proceed to calculate $H^*(BG,\Orb_{BG})$ via the Leray spectral sequence for
the composition of $f\colon \spec k\to BG$ and $\pi\colon BG\to \spec k$.
Some preliminary observations.
\begin{enumerate}
\item Since $G$ is anti-affine, every coherent sheaf on $BG$ is a trivial
vector bundle.
\item If $G$ was assumed to be an affine group scheme, then the natural
  functor $\Psi^+\colon \DCAT^+(\QCOH(BG))\to \DQCOH^+(BG)$ is an equivalence of
  categories and the derived functor $\RDERF \MODPSH{f}\colon \DQCOH^+(\spec k)\to
  \DQCOH^+(BG)$ equals the composition of $\RDERF \QCOHPSH{f}\colon
  \DCAT^+(\MOD(k))\to \DCAT^+(\QCOH(BG))$ with $\Psi^+$. When $G$ is not affine,
  as in our case, both of these facts may fail.
\end{enumerate}
First consider $\mathcal{H}^i\bigl(\RDERF \MODPSH{f} k\bigr)=\RDERF^if_* k\in \QCOH(BG)$.
By flat base change, $f^*\RDERF^if_* k=H^i(G,\Orb_{G})$, which is coherent of rank
$d_i=\binom{g-n}{i}$. By the observation above, $\RDERF^if_*k$ is a trivial vector
bundle of the same rank.

Consider the Leray spectral sequence:
\[
E_2^{pq}=H^p(BG,\RDERF^qf_*k) \Rightarrow E_\infty^{p+q}=H^{p+q}(\spec k,k).
\]
Of course, $H^n(\spec k,k)=0$, unless $n=0$. Since $\RDERF^qf_*k$ is trivial, we also
have that $E_2^{pq}=H^p(BG,\Orb_{BG})\otimes_k k^{d_q}$.

If $n=g$, then $E_2^{pq}=0$ for all $q>0$, so the spectral sequences
degenerates and we deduce that $H^p(BG,\Orb_{BG})=0$ if $p>0$. It follows that
$BG$ has cohomological dimension zero.

If $n<g$, then we claim that $BG$ does not have finite cohomological dimension.
In fact, suppose on the contrary that $BG$ has finite cohomological
dimension. Then $E_2$ is bounded with Euler characteristic zero, since
$\sum_{i=0}^{g-n} (-1)^i d_i=0$. This gives a contradiction since the Euler
characteristic of $E_\infty$ is one.
\end{proof}
\begin{remark}\label{rem:strange_cd}
The groups $G=S\times_A E(A)$ have quite curious properties. The classifying
stack $BG$ has cohomological dimension zero although $G$ is not linearly
reductive (for which we require $G$ affine), showing that
\itemref{TI:Nagata:coh-dim-0} does not always imply \itemref{TI:Nagata:lin-red}
in Theorem~\ref{T:Nagata}. Moreover, the presentation $f\colon \spec k\to BG$
has cohomological dimension zero although $f$ is not affine. This shows that
in~\cite[Lem.~2.2 (6)]{perfect_complexes_stacks}, the assumption that $Y$
has quasi-affine diagonal cannot be weakened beyond affine stabilizers. We also obtain an example of an extension
$0\to U\to E(A)\to A\to 0$ such that $\cohdim(BU)=g$,
$\cohdim(BE(A))=0$ and $\cohdim(BA)=\infty$ for every $g\geq 1$. This shows
that in Lemma~\ref{lem:jarod}, the cohomological dimension of $BQ$ is not
bounded by those of $BG$ and $BH$ unless $\cohdim(BH)=0$.
\end{remark}

\begin{remark}
In the proof of Proposition~\ref{prop:coh-dim-of-anti-affine}, we did not
calculate the cohomology of $BG$ for an anti-affine group scheme $G$. This can
be done in characteristic zero as follows. Recall that $G$ is the extension of
the abelian variety $A$ of dimension $g$ by a commutative group
$G_\aff=T\times U$, where $T$ is a torus and $U\cong (\Ga)^n$ is a
unipotent group of dimension $0\leq n\leq g$. As before, we let $W\subseteq
H^1(A,\Orb_A)^\vee$ be the $k$-vector space (of dimension $g-n$)
corresponding to the vector extension $0\to U\to E\to A\to 0$. Then,
\[
H^j(BG,\Orb_{BG})=H^j(BE,\Orb_{BE})=\begin{cases}
\mathrm{Sym}^d(W^\vee) & \text{if $j=2d\geq 0$,}\\
0 & \text{otherwise.}
\end{cases}
\]
The first equality holds since $BT$ has cohomological dimension zero. The
second equality follows by induction on $g-n$. When $g-n=0$ we saw that
there is no higher cohomology. For $g-n>0$, we consider the Leray spectral
sequence for $BE'\to BE\to \spec k$ where $E'$ is a vector extension of $A$
corresponding to a subspace $W'\subseteq W$ of dimension $g-n-1$. An easy
calculation gives the desired result.

In positive characteristic, $n=0$ and $E=A$ and we expect that the cohomology
is the same as above (with $W=H^1(A,\Orb_A)^\vee$). When $g=1$, that is, when
$A$ is an elliptic curve, the Leray spectral sequence for $\spec k\to BA\to
\spec k$ and an identical calculation as above confirms this.
\end{remark}
\section{Stabilizer groups and cohomological dimension}\label{S:stab-coh-dim}
In this section, we generalize a result of Gaitsgory and
Drinfeld~\cite[Thm.~1.4.2]{MR3037900} on the cohomological dimension of noetherian algebraic stacks in characteristic zero with
affine stabilizers.
We extend their result to positive characteristic 
and also allow stacks with non-finitely presented inertia. 



%
\begin{theorem}\label{T:concentrated-stacks}
Let $X$ be a quasi-compact and quasi-separated algebraic stack with affine stabilizers. If $X$ is either
\begin{enumerate}
\item a $\Q$-stack, or
\item has nice stabilizers, or
\item has nice stabilizers at points of positive
  characteristic and finitely presented inertia,
\end{enumerate}
then $X$ is concentrated. In particular, this is the case if $X$ is a tame
Deligne--Mumford stack, or a tame Artin stack~\cite{MR2427954}. 
\end{theorem}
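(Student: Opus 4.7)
The plan is to combine the pointwise classification of Section~\ref{S:coh-dim-BG} with a stratification-and-approximation argument in the spirit of Drinfeld--Gaitsgory~\cite{MR3037900}, using the approximation results of \cite{rydh-2014} to bypass the noetherian hypothesis.

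\emph{Step 1 (Pointwise analysis).} I would first verify that in each of the three cases every stabilizer $G_x$ has concentrated classifying stack. In case~(1), $X$ is a $\Q$-stack with affine stabilizers, so $BG_x$ is concentrated by Theorem~\ref{T:conc-BG-char-zero}. In case~(2), every $G_x$ is nice, hence affine and linearly reductive, so $BG_x$ has cohomological dimension zero by Theorem~\ref{T:Nagata}. In case~(3), $BG_x$ is concentrated by Theorem~\ref{T:Nagata} at points of positive residue characteristic and by Theorem~\ref{T:conc-BG-char-zero} at points of residue characteristic zero.

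\emph{Step 2 (Reduction to finite presentation and stratification).} To pass from the pointwise statement to the global one, I would first reduce to the finite-presentation case. In case~(3), the finitely presented inertia hypothesis allows standard noetherian approximation. In case~(1), equal characteristic zero together with~\cite{rydh-2014} provides an approximation of $X$ by qcqs algebraic stacks of finite presentation with the same type of stabilizers, and concentratedness descends from the approximants. Once we are of finite presentation, I would stratify $X$ into locally closed substacks $Z_i \hookrightarrow X$ such that each $Z_i$ is a gerbe over a noetherian qcqs algebraic space $Y_i$, banded by a stabilizer of one of the allowed forms. Lemma~\ref{lem:jarod}\itemref{lem:jarod:quot} reduces concentratedness of $Z_i$ to concentratedness of $Y_i$, which is controlled by the Krull dimension of $Y_i$. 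Finally, concentratedness is stable under extension along excision triangles for closed-open decompositions, so induction on the number of strata produces the global result.

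\emph{Step 3 (The nice case without further hypotheses).} For case~(2) we are not automatically given equal characteristic or finitely presented inertia, so one must exploit the rigidity of nice group schemes to verify one of these hypotheses locally. The connected component of a nice stabilizer is of multiplicative type, whose deformations are unobstructed and étale-rigid; this forces the locus where the stabilizer has a prescribed isomorphism type to be locally closed and finitely presented, which lets us carry out the stratification of Step~2. The final clause about tame Deligne--Mumford stacks and tame Artin stacks is then immediate: by definition these have nice stabilizers, and case~(2) applies.

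The principal obstacle I anticipate is Step~3 in case~(2): showing that a stack with only pointwise niceness of stabilizers admits a stratification by locally closed substacks of finite presentation of the desired form, in the absence of any a priori global finiteness. This is precisely where \cite{rydh-2014} is indispensable, since it provides the approximation mechanism that is compatible with the rigidity of multiplicative-type groups and thereby extends the Drinfeld--Gaitsgory argument beyond the noetherian setting.
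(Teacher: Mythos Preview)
Your high-level strategy---stratify and then use an excision lemma to bound cohomological dimension---matches the paper's. The implementation, however, diverges in a way that creates a real gap in cases~(1) and~(2).

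The paper does \emph{not} stratify by gerbes in these cases. Instead it proves a structural result (Proposition~\ref{P:stabilizer-filtration}): any qcqs stack with affine (resp.\ nice) stabilizers admits a finitely presented filtration whose strata map quasi-affinely to some $B\GL_{n,\Z}$ (resp.\ are locally nicely presented over an affine $\Z$-scheme). These strata have finite cohomological dimension immediately---$B\GL_n$ has cohomological dimension zero over $\Q$, and $BH$, $B(\Gm)^n$ have cohomological dimension zero in general---and Lemma~\ref{L:stratify_cohdim} finishes. Only in case~(3) does the paper use a gerbe stratification (Remark~\ref{rem:strata_gerbes}), precisely because finitely presented inertia is assumed there.

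Your Step~2 runs into trouble because a gerbe stratification requires finitely presented inertia, which you do not have in cases~(1) and~(2). Your proposed fix for case~(1)---approximate $X$ by finitely presented stacks $X_\lambda$ and descend concentratedness---needs you to know that the $X_\lambda$ still have affine stabilizers; this is true, but in the paper it is a \emph{corollary} of Proposition~\ref{P:stabilizer-filtration}, not an input. Your Step~3 for case~(2) is a sketch of an argument that, fleshed out, would essentially reprove the nice half of Proposition~\ref{P:stabilizer-filtration}. Finally, Lemma~\ref{lem:jarod}\itemref{lem:jarod:quot} is stated for classifying stacks over a field; invoking it for a gerbe $Z_i$ over an algebraic space $Y_i$ needs a relative version that you have not established.

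In short: the missing ingredient is Proposition~\ref{P:stabilizer-filtration}, which replaces your pointwise-plus-gerbe approach by a direct global filtration that works without any inertia hypothesis.
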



Note that Theorems~\ref{T:Nagata} and~\ref{T:conc-BG-char-zero} give a partial
converse to Theorem \ref{T:concentrated-stacks}: if $X$ is concentrated, then the stabilizer groups of $X$ are either
\begin{enumerate}
\item of positive characteristic and nice;
\item of characteristic zero and affine; or
\item of characteristic zero and extensions of an affine group by
  an anti-affine group of the form $S\times_A E(A)$.
\end{enumerate}
Theorem~\ref{MT:concentrated-stacks} follows from
Theorem~\ref{T:concentrated-stacks} and this converse.

We will prove Theorem~\ref{T:concentrated-stacks} by stratifying the stack
into pieces that admit easy descriptions. For nice stabilizers, we need the
following
\begin{definition}
A morphism of algebraic stacks $X\to Y$ is \emph{nicely presented} if
there exists:
\begin{enumerate}
\item a constant finite group $H$ such that $|H|$ is invertible over $X$,
\item an $H$-torsor $E\to X$, and
\item a $(\Gm)^n$-torsor $T\to E$ such that $T\to Y$ is quasi-affine.
\end{enumerate}
We say that $X\to Y$ is \emph{locally nicely
presented} if $X\times_Y Y'\to Y'$ is nicely presented for some fppf-covering
$Y'\to Y$.
\end{definition}

Note that a locally nicely presented morphism has finite cohomological
dimension.
If $Y$ has nice stabilizers (e.g., $Y$ is a scheme) and $X\to Y$
is locally nicely presented, then $X$ has nice stabilizers. The following lemma will also be useful.
\begin{lemma}\label{L:nice_cons}
  Let $G$ be a group algebraic space of finite presentation over a
  scheme~$S$. If $G$ has affine fibers, then the locus in $S$ where
  the fibers are nice group schemes is constructible.
\end{lemma}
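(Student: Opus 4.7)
The plan is to reduce to $S$ noetherian by standard limit methods, then verify constructibility via the generic-point criterion: a subset of a noetherian scheme is constructible iff for every integral closed $Z \subseteq S$ with generic point $\eta$, either a nonempty open of $Z$ lies in the subset or a nonempty open of $Z$ lies in its complement. By noetherian induction it therefore suffices to show: if $S$ is integral with generic point $\eta$, then there is a nonempty open $U \subseteq S$ on which the truth value of \emph{``$G_s$ is nice''} is constant (and equal to its value at $\eta$).

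To this end, shrink $S$ successively. By generic flatness we may assume $G \to S$ is flat. By the constructibility of fiber dimensions and of the number of geometric connected components of $G_s$ (standard facts for finite-type affine morphisms), we may further assume that $\dim G_s$ and the number of geometric components of $G_s$ are constant in $s$. Under these hypotheses, SGA~3, Exp.~VI$_\textrm{B}$ produces the relative identity component $G^0 \subseteq G$ as an open normal subgroup scheme, with $G^0 \to S$ flat of finite presentation with geometrically connected affine fibers, and with $G/G^0$ finite étale of constant degree $N$ over $S$.

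Niceness of $G_s$ now unwinds into the conjunction of: (a) $G^0_s$ is of multiplicative type; and (b) $N$ is coprime to $\mathrm{char}\,k(s)$. For (b): after shrinking $U$ so that the residue characteristic is constant on $U$ (either $0$ or a fixed prime $p$), (b) is either true on all of $U$ or false on all of $U$. For (a): we invoke the rigidity of multiplicative-type group schemes (SGA~3, Exp.~IX--X): for our $G^0 \to S$, the locus in $S$ where $G^0_s$ is of multiplicative type is open. Thus, by intersecting $U$ with this open set (resp.\ with its complement) depending on whether $G^0_\eta$ is of multiplicative type, (a) becomes constant on $U$. Combining (a) and (b), the niceness of $G_s$ is constant on $U$, completing the argument.

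The main obstacle is step (a), namely the rigidity claim that the locus where $G^0_s$ is of multiplicative type is open, without any smoothness hypothesis on $G^0$ (in positive characteristic one must allow, e.g., $G^0 = \Gmu_p$). The representability of the functor classifying multiplicative-type structures (SGA~3) suffices, but one has to carefully confirm its applicability under our hypotheses (flat affine of finite presentation with geometrically connected fibers). An alternative is to stratify $S$ further by the isomorphism type of the character module of the maximal multiplicative-type quotient of $G^0$; on each stratum the presence or absence of additional non-multiplicative subquotients becomes a clopen condition.
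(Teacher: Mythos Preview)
Your overall strategy---noetherian reduction, then the generic-point criterion on an integral $S$---matches the paper's. The difference lies in how the two key sub-steps are handled, and one of your invocations does not quite hold as stated.

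The gap is your appeal to SGA~3, Exp.~VI$_{\mathrm{B}}$ for the relative identity component $G^0$. The existence of $G^0$ as an open subgroup scheme, with $G/G^0$ finite \'etale, is established there (Th\'eor\`eme~3.10) under the hypothesis that $G\to S$ is \emph{smooth}. In positive characteristic your $G$ need not be smooth (already $\Gmu_p$ and $\Galpha_p$ fail this), and constancy of fiber dimension and number of geometric components does not by itself supply a relative $G^0$. This is not fatal---over an integral base one can simply take $(G_\eta)^0$ and spread it out to an open dense $U$---but that spreading-out argument is precisely what the paper does, and it is not what you have written.

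The paper's proof sidesteps both this issue and your self-identified concern about rigidity. Rather than building $G^0$ over $S$ and then invoking openness of the multiplicative-type locus, the paper works entirely at the generic fiber and spreads out the relevant structure: if $(G_\eta)^0$ is \emph{not} of multiplicative type, then after a finite extension it contains a subgroup $\Ga$ or $\Galpha_p$, and this closed subgroup spreads out over a dense open $U$, forcing every $G_u$ to be non-nice. If $(G_\eta)^0$ \emph{is} of multiplicative type, the paper spreads out the whole short exact sequence $0\to T_\eta\to G_\eta\to H_\eta\to 0$ (with $T_\eta$ diagonalizable and $H_\eta$ constant, after a field extension) to an open $U$, then handles the order of $H$ versus the residue characteristic exactly as you do. This is more elementary: it uses only standard limit/spreading-out and the field-theoretic characterization of multiplicative type via absence of $\Ga$ or $\Galpha_p$ subgroups, rather than the rigidity machinery of SGA~3, Exp.~X. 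Your approach would likely go through once the $G^0$ step is repaired by spreading out, and the openness of the multiplicative-type locus is indeed available in SGA~3, Exp.~X; but the paper's route is shorter and avoids the need to check hypotheses on those results.
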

\begin{proof}
  Standard arguments reduce the situation to the following: $S$ is noetherian and integral with
  generic point~$s$ and $G$ is affine and flat over $S$. We may also
  replace $S$ with $S'$ for any dominant morphism $S'\to S$
  of finite type. In particular, we may replace the residue field of
  the generic point with a finite field extension. Note that if the
  generic point has characteristic $p$, then $S$ is an $\F_p$-scheme.

  If the connected component of $G_s$ is not of multiplicative type, then there
  exists, after a finite field extension, either a subgroup $\Ga \to G_s$ or a
  subgroup $\Galpha_p\to G_s$. By smearing out, there is an induced closed
  subgroup $(\Ga)_U \to G_U$ or $(\Galpha_p)_U\to G_U$, where $U$ is open and
  dense in $S$; in particular, $G_u$ is not nice for every $u\in U$.

  If the connected component of $G_s$ is of multiplicative type, there is,
  after a residue field extension, a sequence $0 \to T_s \to G_s \to H_s \to 0$
  with $T_s$ diagonalizable and $H_s$ constant. We have $T$ and $H$ over $S$
  and we can spread out to an exact sequence over an open dense subscheme $U$ of
  $S$ that agrees with the pull back of $G$ to $U$.

  Let $d$ be the order of $H_s$ and $p$ the characteristic of $\kappa(s)$.
  If $G_s$ is nice, then $p\nmid d$. If $p$ is zero, we may shrink
  $U$ such that no point has characteristic dividing~$d$. Thus $G_u$ is nice for every $u\in U$.
  Conversely, if $G_s$ is not nice, then $p \mid d$ and $G_u$ is not nice for every $u\in U$.
\end{proof}

\begin{definition}
Let $X$ be an algebraic stack. A \emph{finitely presented filtration}
$(X_i)_{i=0}^r$ is a sequence of finitely presented closed substacks
$\emptyset=X_0\hookrightarrow X_1\hookrightarrow \dots \hookrightarrow
X_r\hookrightarrow X$ such that $|X_r|=|X|$.
\end{definition}
\begin{remark}\label{rem:strata_gerbes}
  If $X$ is quasi-compact and quasi-separated with inertia of finite
  presentation (e.g., $X$
  noetherian), then there exists a finitely presented filtration of
  $X$ with strata that are gerbes. In the noetherian case, this is immediate from
  generic flatness and \cite[Prop.~10.8]{MR1771927}. For the general
  case, see \cite[Cor.~8.4]{rydh-2014}. Moreover, by Lemma
  \ref{L:nice_cons}, if $X$ has affine stabilizers as in Theorem
  \ref{MT:fin-coh-dim-of-BG}(1) or (2), then $X$ has a stratification
  by gerbes such that each stratum is either of equal characteristic $0$ or
  nice.
\end{remark}
On a quasi-compact and quasi-separated algebraic stack, every
quasi-coherent sheaf is a direct limit of its finitely generated
quasi-coherent subsheaves. This is well-known for noetherian algebraic
stacks \cite[Prop.~15.4]{MR1771927}. The general case was recently
settled by the second author \cite{rydh-2014}.

\begin{proposition}\label{P:stabilizer-filtration}
Let $X$ be a quasi-compact and quasi-separated algebraic stack. Then
\begin{enumerate}
\item\label{PI:stabfiltr:affine}
  $X$ has \emph{affine} stabilizers if and only if there exists a finitely
  presented filtration $(X_i)_{i=0}^r$, positive integers $n_1,n_2,\dots,n_r$
  and \emph{quasi-affine} morphisms $X_i\setminus X_{i-1}\to
  B\GL_{n_i,\Z}$ for every $i=1,\dots,r$; and
\item\label{PI:stabfiltr:nice}
  $X$ has \emph{nice} stabilizers if and only if there exists a finitely
  presented filtration $(X_i)_{i=0}^r$, affine schemes $S_i$ of finite
  presentation over $\spec \Z$
  and \emph{locally nicely presented} morphisms $X_i\setminus X_{i-1}\to S_i$
  for every $i=1,\dots,r$.
\end{enumerate}
\end{proposition}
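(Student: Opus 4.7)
The plan has two halves. The ``if'' directions in both \itemref{PI:stabfiltr:affine} and \itemref{PI:stabfiltr:nice} follow directly from the definitions: a quasi-affine morphism $X_i\setminus X_{i-1}\to B\GL_{n_i,\Z}$ is representable, so its induced map on stabilizers realizes each stabilizer as a closed subgroup of $\GL_{n_i}$, hence affine; and a locally nicely presented morphism factors through torsors under a constant finite group of order invertible at the residue characteristic and under a torus $(\Gm)^n$, followed by a quasi-affine residual to a scheme, which forces the stabilizers at points of $X$ to be extensions whose pieces are all nice. Since nice groups are closed under subgroups, quotients and extensions (as noted in the paper), the stabilizers of $X$ are nice.

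For the ``only if'' directions, I would first use the approximation results of \cite{rydh-2014} to reduce to a setting with finitely presented inertia, and then apply Remark \ref{rem:strata_gerbes} to obtain a finitely presented filtration of $X$ whose strata are gerbes. In the nice case I would further refine using Lemma \ref{L:nice_cons} so that the stabilizers are uniformly nice on each stratum. By induction on the length of the filtration, the construction reduces to producing the required morphism on a single gerbe $\mathcal{G}\to U$ over an algebraic space $U$, with affine (respectively nice) stabilizer group scheme $G$.

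For \itemref{PI:stabfiltr:affine}, on such a gerbe I would pass to an fppf cover $V\to U$ trivializing $\mathcal{G}$, so $\mathcal{G}_V\cong BG_V$, and choose a faithful representation $G\hookrightarrow \GL_n$ for which the induced morphism $BG_V\to B\GL_{n,V}$ is quasi-affine; descending along the fppf cover then yields a quasi-affine $\mathcal{G}\to B\GL_n$. For \itemref{PI:stabfiltr:nice}, I would exploit the structure of nice groups: after a finite étale cover of $U$, the component group $\pi_0(G)$ becomes the constant group $H$ whose order is invertible on $U$, producing the required $H$-torsor; after a further étale cover, the connected multiplicative-type component $G^0$ admits an isogeny onto its maximal torus $(\Gm)^n$ whose kernel is a finite group of multiplicative type, yielding both the $(\Gm)^n$-torsor and a quasi-affine residual morphism to a suitable affine scheme $S_i$ of finite presentation over $\spec\Z$.

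The main obstacle I anticipate is the gerbe step for \itemref{PI:stabfiltr:affine}: while every affine algebraic group embeds in some $\GL_n$, ensuring that the quotient $\GL_n/G$ is quasi-affine (equivalently, that $BG\to B\GL_n$ is quasi-affine), and doing so uniformly in families over a stratum, requires a careful choice of the embedding since not every closed subgroup of $\GL_n$ is observable. The resolution is to enlarge $n$ and select an embedding realizing a quasi-affine quotient; the bulk of the technical effort should lie in verifying that this choice spreads out from a single geometric fiber to the whole stratum, which is where finite presentation of the inertia and the constructibility of nice locus from Lemma \ref{L:nice_cons} are used.
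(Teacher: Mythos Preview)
Your ``if'' directions are correct and match the paper's one-line ``the conditions are clearly sufficient.''

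For the ``only if'' directions there are two genuine gaps.

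First, the step ``use the approximation results of \cite{rydh-2014} to reduce to a setting with finitely presented inertia'' is not justified and is where your argument actually breaks. Those approximation results do not let you replace a general quasi-compact quasi-separated $X$ by one with finitely presented inertia in a way that transports back a filtration of the required form; and Remark~\ref{rem:strata_gerbes}, which you want to invoke for a global stratification by gerbes, explicitly assumes finitely presented inertia. The paper does \emph{not} make any such reduction. Instead it works point by point: for each $x\in|X|$ it uses \cite[Thm.~B.2]{MR2774654} to produce a locally closed immersion $Z\hookrightarrow X$ with $Z$ an fppf-gerbe over an affine integral scheme and with the residual gerbe $\mathcal{G}_x$ as generic fiber. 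It settles the claim first for gerbes over a field, then spreads out along $Z$ via \cite[Thm.~C]{rydh-2009} to an open $U_x\subseteq Z$, thickens $U_x$ to a finitely presented locally closed substack of $X$ by writing it as a limit of such, and finally extracts a finite cover of $|X|$ by these constructible pieces using quasi-compactness. So the ``stratify first, then analyze each gerbe'' strategy is inverted: one analyzes a neighborhood of each point and only assembles the filtration at the end.

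Second, in your gerbe step for \itemref{PI:stabfiltr:affine}, the sentence ``descending along the fppf cover then yields a quasi-affine $\mathcal{G}\to B\GL_n$'' is not valid: a morphism of stacks does not descend along an fppf cover $V\to U$ without a descent datum, and you have none. The paper sidesteps this by invoking the resolution property: over an algebraically closed field, $BG$ with $G$ affine admits a quasi-affine morphism to some $B\GL_{n,k}$ by \cite[Lem.~3.1]{MR2108211}; over a general field, the gerbe still has the resolution property \cite[Rmk.~7.2]{perfect_complexes_stacks}, which directly yields the quasi-affine morphism $\mathcal{G}\to B\GL_{n,k}$. This also dissolves the observability worry you flagged as the main obstacle: the resolution property is exactly the statement that such a quasi-affine morphism exists, and it is known here without having to hand-pick an embedding.
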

\begin{proof}
The conditions are clearly sufficient. To prove that they are necessary,
first assume that $X$ is an fppf-gerbe over an algebraically closed field $k$.
Then $X=BG$, where $G$ is an affine (resp.\ nice) group
scheme. If $G$ is affine, then
there is a quasi-affine morphism to some $B\GL_{n,k}$~\cite[Lem.~3.1]{MR2108211}.
If $G$ is nice, then $BG^0\to BG$ is an $H=\pi_0(G)$-torsor.
Since $G^0$ is diagonalizable, there is a $(\Gm)^n$-torsor
$(\mathbb{G}_{m,k})^{n-r}\to BG^0$.
Thus $BG\to \spec k\to \spec \Z$ is nicely presented. 

If $k$ is not algebraically closed, then, by approximation, the situation above
holds after passing to a finite field extension $k'/k$. If the stabilizer of
$X$ is affine, then $X$ has the resolution
property~\cite[Rmk.~7.2]{perfect_complexes_stacks} and hence there is a
quasi-affine
morphism
$X\to B\GL_{n,k}$. In this case, let $S=B\GL_{n,\Z}$.
If the stabilizer of $X$ is nice, then $X\to \spec k$ is at least locally
nicely presented. By approximating $\spec k\to \spec \Z$, we obtain a
finitely presented affine scheme $S\to \spec \Z$
such that $X\to \spec k\to S$ is locally nicely presented.

If $X$ is any quasi-separated algebraic stack, then for every point $x\in |X|$
there is an immersion $Z\hookrightarrow X$ such that $Z$ is an fppf-gerbe over an affine
integral scheme $\underline{Z}$ and the residual gerbe $\mathcal{G}_x\to \spec
\kappa(x)$ is the generic fiber of $Z\to \underline{Z}$~\cite[Thm.~B.2]{MR2774654}.
In particular, $\mathcal{G}_x$ is the inverse limit of open neighborhoods
$U\subseteq Z$ of $x$ such that $U\to Z$ is affine.
By~\cite[Thm.~C]{rydh-2009}, there exists
an open neighborhood $x\in U\subseteq Z$ and a morphism $U\to S$ that is
quasi-affine
(resp.\ locally nicely presented).

We may write the quasi-compact immersion $U\hookrightarrow Z\hookrightarrow X$
as a closed immersion $U\hookrightarrow V$ in some quasi-compact open substack
$V\subset X$. Since $V$ is quasi-compact and quasi-separated, we may express 
$U\hookrightarrow V$ as an inverse limit of finitely presented closed
immersions $U_\lambda\hookrightarrow V$. Since $S$ is of finite presentation,
there is a morphism $U_\lambda\to S$ for sufficiently large $\lambda$.
After increasing $\lambda$, the morphism $U_\lambda\to S$ becomes quasi-affine
(resp.\ locally nicely presented) by~\cite[Thm.~C]{rydh-2009}.
Let $U_x=U_\lambda$.

For every $x\in |X|$ proceed as above and choose a locally closed finitely
presented immersion $U_x\hookrightarrow X$ with $x\in |U_x|$. As the substacks
$U_x$ are constructible, it follows by quasi-compactness that a finite number
of the $U_x$'s cover $X$ and we easily obtain a stratification and filtration
as claimed, cf.~\cite[Pf.\ of Prop.~4.4]{MR2774654}.
\end{proof}
The following lemma will be useful.
\begin{lemma}[{\cite[2.3.2]{MR3037900}}]\label{L:stratify_cohdim}
Let $X$ be a quasi-compact and quasi-separated algebraic stack. If $i\colon Z\hookrightarrow X$ is a
finitely presented closed immersion with complement
$j\colon U\hookrightarrow X$, then
\[
\cohdim(X)\leq \max\{\cohdim(U),\cohdim(Z)+\cohdim(j)+1\}.
\]
\end{lemma}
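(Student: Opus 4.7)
The plan is to reduce the problem to controlling two pieces via excision. Let $\mathcal{F}\in\QCOH(X)$. I would apply $R\Gamma(X,-)$ to the distinguished triangle
\[
R\underline{\Gamma}_Z\mathcal{F}\to \mathcal{F}\to Rj_*j^*\mathcal{F}
\]
to obtain the long exact sequence
\[
\cdots\to H^d_Z(X,\mathcal{F})\to H^d(X,\mathcal{F})\to H^d(U,j^*\mathcal{F})\to H^{d+1}_Z(X,\mathcal{F})\to\cdots.
\]
The right-hand term vanishes for $d>\cohdim(U)$ by definition of $\cohdim(U)$, so everything reduces to showing $H^d_Z(X,\mathcal{F})=0$ for $d>\cohdim(Z)+\cohdim(j)+1$.

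First I would compute the local cohomology sheaves $R^q\underline{\Gamma}_Z\mathcal{F}$ from the long exact sequence of cohomology sheaves associated to the same triangle. Since $\mathcal{F}$ is concentrated in a single degree, one reads off $R^q\underline{\Gamma}_Z\mathcal{F}\cong R^{q-1}j_*j^*\mathcal{F}$ for $q\geq 2$, while for $q=0,1$ one gets the kernel and cokernel of $\mathcal{F}\to j_*j^*\mathcal{F}$. In every case $R^q\underline{\Gamma}_Z\mathcal{F}$ is quasi-coherent on $X$, and for $q\geq 1$ its restriction to $U$ vanishes. The definition of $\cohdim(j)$ then forces $R^q\underline{\Gamma}_Z\mathcal{F}=0$ for $q>\cohdim(j)+1$, and the degree shift in the triangle is precisely the source of the $+1$ in the bound.

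Next I would feed this into the local-to-global spectral sequence
\[
E_2^{p,q}=H^p(X,R^q\underline{\Gamma}_Z\mathcal{F})\Rightarrow H^{p+q}_Z(X,\mathcal{F}).
\]
The remaining input is that $H^p(X,\mathcal{G})=0$ for $p>\cohdim(Z)$ whenever $\mathcal{G}\in\QCOH(X)$ satisfies $\mathcal{G}|_U=0$. Since $i$ is finitely presented, such a $\mathcal{G}$ is the filtered colimit of its subsheaves annihilated by powers of the ideal sheaf $\mathcal{I}$; each such subsheaf is a finite extension of quasi-coherent $\mathcal{O}_Z$-modules, so its cohomology on $X$ is controlled by $\cohdim(Z)$. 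Commutation of quasi-coherent cohomology with filtered colimits on the qcqs stack $X$ then completes the estimate, and the spectral sequence gives the desired $H^n_Z(X,\mathcal{F})=0$ for $n>\cohdim(Z)+\cohdim(j)+1$.

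The main obstacle I anticipate is the last ingredient: verifying in the stacky setting that quasi-coherent cohomology on $X$ of a sheaf set-theoretically supported on $Z$ is bounded by $\cohdim(Z)$. This uses the finite-presentation of $i$ to produce the thickening filtration, together with the colimit-commutation result for quasi-coherent sheaves on qcqs algebraic stacks (the result of the second author cited in the excerpt just before Proposition~\ref{P:stabilizer-filtration}). Once that is in place, the remainder is a formal assembly of the excision triangle with the local-to-global spectral sequence.
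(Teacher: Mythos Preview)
Your argument is correct and follows essentially the same route as the paper's proof: both use the excision triangle $F\to \RDERF j_*j^*F$ (your $R\underline{\Gamma}_Z\mathcal{F}$ is just the shifted cone), bound the cohomology sheaves of the cone by $\cohdim(j)$, and then handle sheaves with $j^*G=0$ via the $\mathcal{I}$-adic thickening filtration together with the filtered-colimit description of quasi-coherent sheaves on a qcqs stack. The only difference is cosmetic: you phrase the hypercohomology step as a local-to-global spectral sequence for $R\underline{\Gamma}_Z$, whereas the paper works directly with the cone; and you filter $G$ by its $\mathcal{I}^n$-torsion subsheaves rather than by finitely generated subsheaves, which is an equivalent reduction.
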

\begin{proof}
Let $I$ denote the ideal sheaf defining $Z$ in $X$. Let $F$ be a
quasi-coherent sheaf on $X$. Consider the adjunction map $F\to \RDERF j_*j^*F$
and let
$C$ denote the cone. Then $j^*C=0$ and $C$ is supported in degrees
$\leq \cohdim(j)$. Since $H^d(\RDERF\Gamma \RDERF j_*j^*F)=H^d(U,j^*F)=0$ for
$d> \cohdim(U)$, it
is enough to show that $H^d(X,G)=0$ if $G$ is a quasi-coherent sheaf such
that
$j^*G=0$ and $d>\cohdim(Z)$. After writing $G$ as a direct limit of its finitely
generated
subsheaves, we may further assume that $G$ is finitely generated. Then
$I^nG=0$ for sufficiently large $n$ and one easily proves that $H^d(X,G)=0$
by induction on $n$.
\end{proof}
We now prove the main result of this section.
\begin{proof}[Proof of Theorem~\ref{T:concentrated-stacks}]
We first treat (1) and (2). Choose a filtration as in
Proposition~\ref{P:stabilizer-filtration}\itemref{PI:stabfiltr:affine} or
\itemref{PI:stabfiltr:nice}.
In characteristic zero, $B\GL_n$ has cohomological dimension zero and
quasi-affine morphisms have finite cohomological dimension. In arbitrary
characteristic, locally nicely presented morphisms have finite cohomological
dimension. Indeed, $BH$ and $B(\Gm)^n$ have cohomological dimension zero.
Thus, the Theorem follows from Lemma \ref{L:stratify_cohdim}. For (3), we may choose a filtration as in Remark \ref{rem:strata_gerbes}. Then the result follows from Lemma \ref{L:stratify_cohdim} and the cases (1) and (2) already proved. 
\end{proof}

There are several other applications of the structure result of
Proposition~\ref{P:stabilizer-filtration}. An immediate corollary is that the
locus of points where the stabilizers are affine
(resp.\ nice) is ind-constructible. This is false for ``linearly reductive'':
the locus with linearly reductive stabilizers in $B\GL_{n,\Z}$, for $n\geq 2$,
is the subset $B\GL_{n,\Q}$ which is not ind-constructible.
Another corollary is the following
approximation result.

\begin{theorem}
Let $S$ be a quasi-compact algebraic stack and let $X=\varprojlim_\lambda
X_\lambda$ be an inverse limit of quasi-compact and quasi-separated morphisms
of algebraic stacks $X_\lambda\to S$
with affine transition maps. Then $X$ has affine
(resp.\ nice) stabilizers if and only if $X_\lambda$ has
affine (resp.\ nice) stabilizers for sufficiently large
$\lambda$.
\end{theorem}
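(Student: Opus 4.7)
My plan is to treat the two directions separately. For the easy direction, I would argue that if $X_{\lambda_0}$ has affine (resp.\ nice) stabilizers for some index $\lambda_0$, then so does $X$. The transition maps are affine, hence representable, so the induced morphism $X\to X_{\lambda_0}$ is representable; consequently, at every point $x$ of $X$ with image $x_{\lambda_0}$ in $X_{\lambda_0}$, the stabilizer $G_x$ is a closed subgroup scheme of $G_{x_{\lambda_0}}$. Closed subgroup schemes of affine group schemes are affine, and the text already records that subgroups of nice group schemes are nice, so the conclusion follows in both cases.

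For the converse, I would start from Proposition~\ref{P:stabilizer-filtration} to obtain a finitely presented filtration $\emptyset = X_0\hookrightarrow X_1\hookrightarrow \cdots \hookrightarrow X_r$ with $|X_r|=|X|$, together with quasi-affine morphisms $X_i\setminus X_{i-1}\to B\GL_{n_i,\Z}$ in the affine case (resp.\ locally nicely presented morphisms $X_i\setminus X_{i-1}\to S_i$ to finitely presented affine $\Z$-schemes $S_i$ in the nice case). Since $X=\varprojlim_\lambda X_\lambda$ with affine transitions, the approximation results of~\cite{rydh-2014,rydh-2009} apply: for sufficiently large $\mu$, the closed substacks $X_i$ descend to a finitely presented filtration $(X_{i,\mu})$ of $X_\mu$, and the structure morphisms $X_i\setminus X_{i-1}\to B\GL_{n_i,\Z}$ (resp.\ $\to S_i$) descend to morphisms out of $X_{i,\mu}\setminus X_{i-1,\mu}$. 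Being quasi-affine (resp.\ locally nicely presented) is a finitely presented condition, so after possibly enlarging $\mu$ it persists, by~\cite[Thm.~C]{rydh-2009}. The converse direction of Proposition~\ref{P:stabilizer-filtration} applied to $X_\mu$ then yields that $X_\mu$ has affine (resp.\ nice) stabilizers.

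The main obstacle will be bookkeeping: every piece of the descent data — the closed immersions, the auxiliary morphisms, and the ``quasi-affine''/``locally nicely presented'' property of these morphisms — must be descended simultaneously to a common index $\mu$. A subtle point is ensuring that the surjectivity condition $|X_r|=|X|$ persists as $|X_{r,\mu}|=|X_\mu|$ at level $\mu$; this will follow because the open complement $X_\mu\setminus X_{r,\mu}$ is a quasi-compact open substack with empty limit under the affine transitions, and so it must itself be empty for $\mu$ sufficiently large. I expect the ``locally nicely presented'' case to require slightly more care than ``quasi-affine'', since it involves descending an fppf cover together with the auxiliary torsor data, but this should again follow from finite-presentation arguments and the fppf approximation machinery of~\cite{rydh-2009}.
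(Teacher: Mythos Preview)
Your approach is essentially the same as the paper's: the paper's proof is a one-line invocation of Proposition~\ref{P:stabilizer-filtration} together with \cite[Thm.~C]{rydh-2009}, and your proposal is precisely an unpacking of that invocation---descend the finitely presented filtration, the structural morphisms to $B\GL_{n_i,\Z}$ (resp.\ $S_i$), and the quasi-affine (resp.\ locally nicely presented) property, then apply the ``if'' direction of the Proposition. The one step you omit is the paper's initial observation that the question is fppf-local on $S$, so one may take $S$ affine; this is not essential to the argument but streamlines the invocation of the approximation results.
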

\begin{proof}
The question is fppf-local on $S$, so we can assume that $S$ is affine.
Note that if $X\to Y$ is affine and
$Y$ has affine (resp.\ nice) stabilizers, then so has
$X$. The result now follows from Proposition~\ref{P:stabilizer-filtration}
and~\cite[Thm.~C]{rydh-2009}.
\end{proof}

Thus if $X_\lambda$ is of equal characteristic and has affine stabilizer
groups, then $X\to S$ has finite cohomological
dimension if and only if $X_\lambda\to S$ has finite cohomological dimension
for sufficiently large $\lambda$. The example $X=B\GL_{2,\Q}=\varprojlim_m B\GL_{2,\Z[\frac{1}{m}]}$ shows that this is false in mixed characteristic.

\section{Compact generation of classifying stacks}\label{S:compact-generation}
In this section, we prove Theorem~\ref{MT:compact-gen-of-BG} on the compact
generation of classifying stacks. The following three lemmas will be useful.
\begin{lemma}\label{L:cons-adjoint}
  Let $F \colon \mathcal{T} \to \mathcal{S}$ be a triangulated functor
  between triangulated categories that are closed under small
  coproducts. Assume that $F$ admits a conservative right adjoint $G$ that 
  preserves small coproducts. If $\mathcal{T}$ is
  compactly generated by a set $T$, then $\mathcal{S}$ is compactly
  generated by the set $F(T)=\{F(t) \suchthat t \in T\}$.
\end{lemma}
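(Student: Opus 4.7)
The plan is to verify the two defining conditions for compact generation of $\mathcal{S}$ by $F(T)$: namely, that every $F(t)$ is compact in $\mathcal{S}$, and that $F(T)$ generates $\mathcal{S}$ (in the sense that an object $s\in\mathcal{S}$ with $\Hom_{\mathcal{S}}(F(t),s[n])=0$ for all $t\in T$ and $n\in\Z$ must vanish). Both will follow from the adjunction $\Hom_{\mathcal{S}}(F(-),-)\cong \Hom_{\mathcal{T}}(-,G(-))$ together with the two hypotheses on $G$.

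For compactness, fix $t\in T$ and a small family $\{s_i\}$ in $\mathcal{S}$. Using adjunction and the assumption that $G$ preserves small coproducts:
\[
\Hom_{\mathcal{S}}\Bigl(F(t),\bigoplus_i s_i\Bigr)
\cong \Hom_{\mathcal{T}}\Bigl(t,G\Bigl(\bigoplus_i s_i\Bigr)\Bigr)
\cong \Hom_{\mathcal{T}}\Bigl(t,\bigoplus_i G(s_i)\Bigr).
\]
Since $t$ is compact in $\mathcal{T}$, this last group is $\bigoplus_i \Hom_{\mathcal{T}}(t,G(s_i))\cong \bigoplus_i \Hom_{\mathcal{S}}(F(t),s_i)$; hence $F(t)$ is compact.

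For generation, suppose $s\in\mathcal{S}$ satisfies $\Hom_{\mathcal{S}}(F(t),s[n])=0$ for all $t\in T$ and $n\in\Z$. By adjunction $\Hom_{\mathcal{T}}(t,G(s)[n])=0$ for all such $t,n$, so since $T$ generates $\mathcal{T}$ we conclude $G(s)=0$. Now use conservativity of $G$: the map $0\to s$ becomes an isomorphism after applying $G$ (both sides are zero), hence $0\to s$ is an isomorphism, i.e.\ $s=0$. This finishes the proof.

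There is no real obstacle here; the only subtlety worth flagging is the final appeal to conservativity, which in the triangulated setting is equivalent to the statement that $G$ reflects zero objects, which is exactly what is needed to pass from $G(s)=0$ to $s=0$.
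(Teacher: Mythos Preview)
Your proof is correct and follows essentially the same approach as the paper's. The only cosmetic difference is that the paper cites Neeman's \cite[Thm.~5.1 ``$\Rightarrow$'']{MR1308405} for the compactness of $F(t)$, whereas you write out the standard adjunction argument directly; the generation arguments are contrapositives of one another.
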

\begin{proof}
  By \cite[Thm.~5.1~``$\Rightarrow$'']{MR1308405}, $F(T) \subseteq
  \mathcal{S}^c$. Thus, it remains to prove that the set $F(T)$ is
  generating. If $s \in \mathcal{S}$ is non-zero, then $G(s)$ is
  non-zero. It follows that there is a non-zero map $t \to G(s)[n]$
  for some $t\in T$ and $n\in \Z$. By adjunction, there is a non-zero
  map $F(t) \to s[n]$, and we have the claim.
\end{proof}
\begin{lemma}\label{L:qppsh}
  Let $\pi\colon X' \to X$ be a proper and faithfully flat morphism of
  noetherian algebraic stacks. Assume that either $\pi$ is finite or a
  torsor for a smooth group scheme. If a set $T$ compactly generates
  $\DQCOH(X')$, then the set $\{\RDERF \pi_*P \suchthat P \in T\}$
  compactly generates $\DQCOH(X)$.
\end{lemma}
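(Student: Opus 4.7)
The plan is to apply Lemma~\ref{L:cons-adjoint} to the triangulated functor $F = \RDERF\pi_*\colon \DQCOH(X') \to \DQCOH(X)$; this will yield directly that $\{\RDERF\pi_* P \suchthat P \in T\}$ compactly generates $\DQCOH(X)$, as soon as we exhibit a right adjoint to $F$ that preserves small coproducts and is conservative. In both cases $\pi$ has finite cohomological dimension (zero in the finite case, bounded by the relative dimension in the torsor case), so $F$ preserves small coproducts; a right adjoint therefore exists, and we will obtain a useful explicit description in each case.

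In the finite case, $\pi$ is affine, so $F = \pi_*$ is exact, and its right adjoint is
\[
\pi^\times(\mathcal{F}) \homeq \SRHOM_{\Orb_X}(\pi_*\Orb_{X'}, \mathcal{F}),
\]
equipped with its natural $\Orb_{X'}$-module structure. Since $\pi$ is finite and flat, $\pi_*\Orb_{X'}$ is a locally free $\Orb_X$-module of finite positive rank; thus $\pi^\times$ is locally a finite direct sum of copies of the identity (up to twisting by a line bundle), so it preserves small coproducts, and it is conservative because $\pi_*\Orb_{X'}$ is a faithful $\Orb_X$-module.

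In the torsor case, write $H$ for the smooth group scheme such that $\pi$ is an $H$-torsor. Then $\pi$ is fppf-locally the projection $H\times X\to X$, so $\pi$ is smooth of some relative dimension $d = \dim H$; it is also proper by hypothesis, so in fact $H$ is proper. Grothendieck duality for the smooth proper morphism $\pi$ then provides a right adjoint
\[
\pi^!(\mathcal{F}) \homeq \LDERF\pi^*(\mathcal{F}) \tensor \omega_{X'/X}[d],
\]
where $\omega_{X'/X}$, the top exterior power of the sheaf of relative differentials, is an invertible $\Orb_{X'}$-module. Tensoring with an invertible sheaf and shifting each commute with small coproducts, so $\pi^!$ does as well. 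Conservativity of $\pi^!$ reduces to conservativity of $\LDERF\pi^*$, which holds because $\pi$ is faithfully flat. Alternatively, for this case one can bypass duality by using flat base change along $\pi$ together with the identification $X' \times_X X' \homeq H\times X'$ to see directly that $\RDERF\pi_*$ preserves compactness.

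The main obstacle is to invoke the correct form of Grothendieck duality for proper morphisms of noetherian algebraic stacks, so that the explicit local formulas for the right adjoints used above actually apply; this is essentially a matter of citing the appropriate references and checking they are applicable in the stack setting. Granting those identifications, the hypotheses of Lemma~\ref{L:cons-adjoint} are satisfied in both cases, and the conclusion follows.
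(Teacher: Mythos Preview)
Your approach is essentially the paper's: both reduce to Lemma~\ref{L:cons-adjoint} by exhibiting a coproduct-preserving, conservative right adjoint to $\RDERF\pi_*$, handling the finite and torsor cases separately. The paper phrases this via the ``$\DQCOH$-quasiperfect'' formalism of \cite{perfect_complexes_stacks} (citing \cite[Ex.~6.5]{perfect_complexes_stacks} for the finite case and Proposition~\ref{P:quasi-perfect-smooth-proper} for the smooth proper case), whereas you unpack the right adjoints explicitly; the Grothendieck-duality input you flag as the ``main obstacle'' is exactly Proposition~\ref{P:quasi-perfect-smooth-proper} of the appendix, which applies because a torsor for a smooth group \emph{scheme} is locally schematic. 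One minor imprecision: in the finite case $\pi^\times(\Orb_X)$ need not be a line bundle unless $\pi$ is Gorenstein, so the ``twist by a line bundle'' description is not literally correct in general---but your actual argument only uses that $\pi_*\pi^\times(-)\simeq \SRHOM_{\Orb_X}(\pi_*\Orb_{X'},-)$ is locally a finite positive power of the identity, together with conservativity and coproduct-preservation of $\pi_*$ for affine $\pi$, and that is fine.
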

\begin{proof}
  By \cite[Ex.~6.5]{perfect_complexes_stacks} and Proposition
  \ref{P:quasi-perfect-smooth-proper}, in both cases $\RDERF\pi_*$ is
  $\DQCOH$-quasiperfect with respect to open immersions
  (see \cite[Defn.~6.4]{perfect_complexes_stacks}) and
  its right adjoint $\pi^!$ is conservative. The claim now
  follows from Lemma \ref{L:cons-adjoint}.
\end{proof}
\begin{lemma}\label{L:cons-extension}
  Let $k$ be a field and let $1 \to K \to G \to H \to 1$ be a short
  exact sequence of group schemes of finite type over $k$.
  Let $p\colon BG \to BH$ be
  the induced morphism. Assume that either
  \begin{enumerate}
  \item \label{lem:conservative:trivgen} $\DQCOH(BK)$ is compactly
    generated by $\Orb_{BK}$, or
  \item \label{lem:conservative:aa} $K \subseteq G_\ant$ and
    $\cohdim(BK) = 0$.
  \end{enumerate}
  Then $\RDERF p_* \colon \DQCOH(BG) \to \DQCOH(BH)$ is concentrated
  and conservative.
\end{lemma}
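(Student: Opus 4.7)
The plan is to reduce both assertions to properties of $BK$ by base change along the atlas $q\colon \spec k \to BH$. The short exact sequence yields a $2$-cartesian square
\[
\xymatrix{BK \ar[r]^{q'} \ar[d]_{p'} & BG \ar[d]^{p} \\ \spec k \ar[r]_{q} & BH,}
\]
in which $q$, and hence $q'$, is faithfully flat, so flat base change gives $q^{*}\RDERF p_{*}\simeq\RDERF p'_{*}(q')^{*}$. Concentratedness will then follow from \cite[Lem.~2.2(2)]{perfect_complexes_stacks} together with the inequality $\cohdim(p)\leq\cohdim(p')=\cohdim(BK)$: in Case~\itemref{lem:conservative:trivgen}, compactness of $\Orb_{BK}$ is equivalent to $BK$ having finite cohomological dimension (by the characterisations recalled in the introduction), while in Case~\itemref{lem:conservative:aa}, $\cohdim(BK)=0$ is the hypothesis.

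For conservativity, suppose $M\in\DQCOH(BG)$ satisfies $\RDERF p_{*}M=0$. Since $(q')^{*}$ is conservative, it will suffice to prove $(q')^{*}M=0$, and the base change above reduces this to showing that any $N\in\DQCOH(BK)$ with $\RDERF\Gamma(BK,N)=0$ vanishes. In Case~\itemref{lem:conservative:trivgen} this is immediate from the hypothesis: $\Orb_{BK}$ being a compact generator, the vanishing of $\Hom_{\DQCOH(BK)}(\Orb_{BK},N[i])=H^{i}(BK,N)$ for every $i$ forces $N=0$.

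Case~\itemref{lem:conservative:aa} will be the main obstacle, and here I will exploit that $K\subseteq G_\ant$ together with the anti-affinity of $G_\ant$ (whose affinisation is $\spec k$): every locally finite $G_\ant$-representation is trivial, so the restriction to $BK$ of any $F\in\QCOH(BG)$ is a trivial $K$-representation, i.e., a direct sum of copies of $\Orb_{BK}$. Applied to the cohomology sheaves of $N=(q')^{*}M$, each $\COHO{q}(N)=(q')^{*}\COHO{q}(M)$ will be trivial, so $\Gamma(BK,\COHO{q}(N))=\COHO{q}(N)$. Combined with $\cohdim(BK)=0$, the hypercohomology spectral sequence degenerates to give $H^{q}(BK,N)\simeq\COHO{q}(N)$, whose vanishing then forces $N=0$. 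Extending this degenerate-spectral-sequence argument to unbounded complexes will rely on left-completeness of $\DQCOH(BK)$ (recalled in the introduction) and the commutation of $\RDERF\Gamma(BK,-)$ with homotopy limits under the finite cohomological dimension hypothesis.
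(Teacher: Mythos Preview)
Your proof is correct and, for Case~\itemref{lem:conservative:trivgen}, essentially identical to the paper's: both base change along $\spec k\to BH$ to reduce concentratedness and conservativity to the structure map $BK\to\spec k$, then use that $\Orb_{BK}$ being a compact generator detects non-zero objects via $\RDERF\Gamma$.

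For Case~\itemref{lem:conservative:aa}, both arguments rest on the same key input---that every quasi-coherent sheaf on $BG_\ant$ is trivial---but package it differently. The paper first uses $\cohdim(p)=0$ to get $\COHO{i}(\RDERF p_*M)=p_*\COHO{i}(M)$, reducing immediately to conservativity of $p_*$ on the abelian category $\QCOH(BG)$; it then factors $p$ as $BG\xrightarrow{p}BH\to B(G/G_\ant)$ and observes that the composite $q_*\colon\QCOH(BG)\to\QCOH(B(G/G_\ant))$ is conservative because smooth-locally it is the equivalence $\QCOH(BG_\ant)\simeq\QCOH(\spec k)$. Your route instead uses the dual factorization $BK\to BG_\ant\to BG$ to see that cohomology sheaves of $(q')^*M$ are trivial $K$-representations, and then recovers them from $\RDERF\Gamma(BK,-)$ via the degenerate spectral sequence. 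The paper's organization is slightly cleaner: by reducing to the abelian level at the outset, it sidesteps the left-completeness/homotopy-limit discussion you need to handle unbounded complexes. Your approach, on the other hand, makes more explicit why the hypothesis $K\subseteq G_\ant$ (rather than merely $\cohdim(BK)=0$) is essential.
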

\begin{proof}
  For \itemref{lem:conservative:trivgen}, the pull back of $p$ along the universal $H$-torsor is the
  morphism $p'\colon BK \to \spec k$. Since $\DQCOH(BK)$ is compactly
  generated by $\Orb_{BK}$, it follows that $BK$ is concentrated. By
  \cite[Lem.~2.5(2)]{perfect_complexes_stacks}, $p$ is
  concentrated. To prove that $\RDERF p_*$ is conservative, by
  \cite[Thm.~2.6]{perfect_complexes_stacks}, it remains to prove that
  $\RDERF p'_*$ is conservative. If $M\in \DQCOH(BK)$ is non-zero,
  then by assumption there is a non-zero map $\Orb_{BK}[n] \to M$ for
  some integer $n$. Since $\LDERF p'^*\Orb_{\spec k} \homotopic
  \Orb_{BK}$, by adjunction, there is a non-zero map $\Orb_{\spec
    k}[n] \to \RDERF p'_*M$. The claim follows.

  For \itemref{lem:conservative:aa}, by
  \cite[Lem.~2.2(2)~\&~2.5(2)]{perfect_complexes_stacks}, $\cohdim(p) = 0$ and
  $p$ is concentrated. Thus, if $M \in \DQCOH(BG)$ and $i\in \Z$, then
  $\COHO{i}(\RDERF p_*M) = p_*\COHO{i}(M)$. So to establish that
  $\RDERF p_*$ is conservative, it remains to prove that the functor
  $p_*\colon \QCOH(BG) \to \QCOH(BH)$ is conservative. Let $q\colon
  BG \to B(G/G_\ant)$ be the natural morphism. Then $q$
  factors as $BG \xrightarrow{p} BH \to B(G/G_\ant)$. Smooth-locally $q$ is the morphism $BG_\ant
  \to \spec k$, and $\QCOH(BG_\ant) \to \QCOH(\spec k)$ is an
  equivalence \cite[Lem.~1.1]{MR2488561}. By descent, it follows that $q_*$ is conservative. Hence, $p_*$ is conservative. The result follows.
\end{proof}
\begin{proof}[Proof of Theorem~\ref{MT:compact-gen-of-BG}]
If $k$ has positive characteristic and $\red{\overline{G}}^0$ is not
semi-abelian, then $B_kG$ is poorly
stabilized~\cite[Lem.~4.1]{hallj_neeman_dary_no_compacts}, so
$\DQCOH(B_kG)$ is not compactly
generated~\cite[Thm.~1.1]{hallj_neeman_dary_no_compacts}. Conversely, assume
either that $k$ has characteristic zero or that $\red{\overline{G}}^0$ is
semi-abelian.

Let $G^0$ be the connected component of $G$. Then $BG^0\to BG$ is
finite and faithfully flat. By Lemma \ref{L:qppsh}, we may assume that
$G=G_0$. By Lemma \ref{L:qppsh}, we may always pass to finite extensions of the ground field $k$. In particular, we may assume that $\red{G}$ is a
smooth group scheme. Similarly, since $B\red{G}\to BG$ is finite and
faithfully flat, we may replace $G$ with $\red{G}$. Hence, we may assume
that $G$ is smooth and connected.

By Chevalley's Theorem \cite[Thm.~1.1]{MR1906417}, we may (after
passing to a finite extension of $k$) write $G$ as an extension of an
abelian variety $A$ by a smooth connected affine group $G_\aff$. By assumption,
$G_\aff$ is a torus in positive characteristic. In particular, $BG_\aff$ is
concentrated, has affine diagonal and the resolution property; thus
$\DQCOH(BG_\aff)$ is compactly generated by a set of compact vector bundles
\cite[Prop.~8.4]{perfect_complexes_stacks}. Since the induced map $f\colon
BG_\aff \to BG$ is an $A$-torsor, $\DQCOH(BG)$ is compactly generated (Lemma \ref{L:qppsh}). Note that this also establishes
\itemref{MT:compact-gen-of-BG:comp-reps}.

For \itemref{MT:compact-gen-of-BG:irreps-fcd}, let $M \in \DQCOH(B_kG)$ and suppose that $M\not\simeq 0$. By \itemref{MT:compact-gen-of-BG:comp-reps}, there exists a non-zero map $V[n] \to M$, where $V$ is a finite-dimensional $k$-representation of $G$. Let $L \subseteq V$ be an irreducible $k$-subrepresentation of $G$. If the composition $L[n] \to V[n] \to M$ is zero, then there is an induced non-zero map $(V/L)[n] \to M$. Since $V$ is finite-dimensional, we must eventually arrive at the situation where there is a non-zero map $L[n] \to M$, where $L$ is irreducible. Finally, $B_kG$ has finite cohomological dimension (Theorem \ref{MT:fin-coh-dim-of-BG}), so $L$ is compact \cite[Rem.~4.6]{perfect_complexes_stacks}.

It remains to address \itemref{MT:compact-gen-of-BG:one}. Suppose that
$\DQCOH(BG)$ is compactly generated by a single perfect complex. Then
so too is $\DQCOH(B\red{\bar{G}}^0)$. Assume that
$k=\bar{k}$
and $G=\red{\bar{G}}^0$; in particular, $G$ is smooth and connected
and $k$ is perfect. To derive a contradiction, we assume
that $G/G_\ant$---the affinization of $G$---is not unipotent.
By Chevalley's Theorem \cite[Thm.~1.1]{MR1906417},
$G$ is an extension of an abelian variety $A$ by a connected smooth affine
group $G_\aff$. The exact sequence of \cite[Prop.~3.1(i)]{MR2488561}
quickly implies that the induced map $G_\aff \to G/G_\ant$ is
surjective. In particular, $G_\aff$ is not unipotent;
moreover, there is a subgroup
$\Gm \subset G_\aff$ such that the induced map $\Gm \to G/G_\ant$ has kernel $\mu_n$ for some $n$. Since $G/G_\ant$ is affine and $\Gm$ is linearly reductive, it follows that the induced morphism $\phi \colon B(\Gm/\mu_n) \to B(G/G_\ant)$ is affine; in particular, the functor $\RDERF\phi_*$ is conservative. 

Let $\mathcal{L}$ be the standard representation of $\Gm$. Then for
every integer $r$, a brief calculation using that $\RDERF \phi_*$ is
conservative proves that $\RDERF q_*(\mathcal{L}^{\tensor rn}) \neq
0$, where $q$ is the composition $B\Gm \to B(\Gm/\mu_n)
\xrightarrow{\phi} B(G/G_\ant)$. If $\DQCOH(BG)$ is
compactly generated by a single perfect complex $P$, then for every
integer $r$ there exist integers $m_r$ and non-zero maps $l_r \colon P
\to \RDERF \psi_*(\mathcal{L}^{\tensor rn})[m_r]$, where $\psi \colon
B\Gm \to BG_\aff\to BG$ is the induced map; indeed, $\RDERF q_*$ is conservative
so $\RDERF \psi_*(\mathcal{L}^{\tensor rn}) \neq 0$ for every $r$. By
adjunction, there are non-zero maps $\LDERF \psi^*P \to
\mathcal{L}^{\tensor rn}[m_r]$, for every $r$. That is,
\[
\Hom_{\Orb_{B\Gm}}(\LDERF \psi^*P, \mathcal{L}^{\tensor rn}[m_r]) = \Hom_{\Orb_{B\Gm}}(\psi^*\COHO{m_r}(P),\mathcal{L}^{\tensor rn})
\]
is non-zero for every integer $r$. But $\LDERF \psi^*P$ is perfect, so there are only finitely many non-zero $\COHO{i}(P)$ and only a finite number of the representations $\mathcal{L}^{\tensor rn}$ appear in $\psi^*\COHO{i}(P)$. Hence, we have a contradiction, so the affinization of $\red{\bar{G}}^0$ is unipotent.

Conversely, suppose that the affinization of $\red{\bar{G}}^0$ is
unipotent. By Lemma \ref{L:qppsh} and arguing as before,
after passing to a finite extension of $k$, we may assume that
$G=\red{G}^0$ and that the affinization $G/G_\ant$ is unipotent.
Passing to
a further finite extension of $k$, by Chevalley's Theorem
\cite[Thm.~1.1]{MR1906417}, we may assume that $G$
(resp.\ $G_\ant$) is an extension of an abelian scheme $A$
(resp.\ $A'$) by a connected smooth affine group $G_\aff$ (resp.\ $G_\aff'$). Note that
if $k$ has positive characteristic, then since $\DQCOH(BG)$ is
compactly generated, it follows by what we have already established
that $G_\aff$ has no unipotent elements; in particular, since $G_\aff \to
G/G_\ant$ is surjective (arguing as above) and $G/G_\ant$
is unipotent, it follows that $G/G_\ant$ is trivial.

By \cite[Prop.~3.1(ii)]{MR2488561} we have that $G_\aff' \subseteq G_\aff$. Since $G_\aff'$ is
smooth, affine, connected and commutative, it follows that $G_\aff'=T\times
U$, where $T$ is a torus and $U$ is connected and unipotent
\cite[(2.5)]{MR2488561}. Note that from the above, if $k$ has positive
characteristic, then $G_\aff'=T$. By assumption, $G$ is connected; thus,
$G_\ant \subseteq Z(G)$ \cite[Cor.~III.3.8.3]{MR0302656}. In
particular, $T$ is a normal subgroup of both $G_\aff$ and $G$. By Lemmas
\ref{L:cons-adjoint} and \ref{L:cons-extension}\itemref{lem:conservative:aa},
it suffices to prove that $\DQCOH(G/T)$ is compactly generated by a
single perfect complex.

We have exact sequences
\[
\xymatrix@R-1pc{1 \ar[r] & G_\aff/T \ar[r] & G/T \ar[r] & A \ar[r] & 1\phantom{.} \\ 
1 \ar[r] &  U \ar[r] & G_\aff/T \ar[r] & G_\aff/G_\aff' \ar[r] & 1. }
\]
The kernel of the surjective map $G_\aff/G_\aff'\twoheadrightarrow G/G_\ant$ is
finite by~\cite[Prop.~3.1(ii)]{MR2488561}. By assumption $G/G_\ant$
is unipotent and $G_\aff/G_\aff'$ is connected and smooth; hence $G_\aff/G_\aff'$ and $G_\aff/T$ are
unipotent. Note that in positive characteristic $G_\aff/T=0$.

We know that $\DQCOH(BA)$ is compactly generated by a single perfect complex
(Lemma~\ref{L:qppsh}). In characteristic zero, since $G_\aff/T$ is unipotent, we
have also established that $\DQCOH(B(G_\aff/T))$ is compactly generated by the
structure sheaf in~\itemref{MT:compact-gen-of-BG:irreps-fcd}. Hence, by Lemmas
\ref{L:cons-adjoint} and
\ref{L:cons-extension}\itemref{lem:conservative:trivgen}, we have that
$\DQCOH(B(G/T))$ is compactly generated and the result follows.
\end{proof}
\begin{remark}\label{rem:gens-of-BG} 
In characteristic zero, the proof of Theorem \ref{MT:compact-gen-of-BG} shows that if $G^0$ fits in an exact sequence of group schemes $0 \to U \to G^0 \to A \to 0$, where $U$ is unipotent, then $\DQCOH(BG)$ is compactly generated by the perfect complex $\RDERF \pi_*\Orb_{B U}$, where $\pi \colon BU \to BG$ is the induced morphism. 
\end{remark}
\begin{corollary}\label{C:gerbes}
  Let $k$ be a field. Let $\mathcal{G}$ be a
  quasi-compact and quasi-separated fppf gerbe over $\spec k$.
  The derived category $\DQCOH(\mathcal{G})$ is compactly generated if and only if
  $\mathcal{G}$ is not poorly stabilized.
\end{corollary}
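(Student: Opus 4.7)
The plan is to establish each implication separately; both directions reduce quickly to Theorem~\ref{MT:compact-gen-of-BG} via a single finite base change. The forward implication ($\Rightarrow$) is essentially formal: the point-wise obstruction recalled in the Introduction says that if $\DQCOH(\mathcal{G})$ is compactly generated, then for every point $x\colon \spec K \to \mathcal{G}$ the derived category $\DQCOH(B_KG_x)$ is compactly generated. Applying this to a geometric point and invoking Theorem~\ref{MT:compact-gen-of-BG}, one concludes that in positive characteristic $(\red{\bar G_x})^0$ is semi-abelian, so $G_x$ contains no copy of $\Ga$; that is, $\mathcal{G}$ is not poorly stabilized.

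For the reverse implication ($\Leftarrow$), the strategy is to trivialize $\mathcal{G}$ after a finite field extension and then descend using Lemma~\ref{L:qppsh}. First, since $\mathcal{G}$ is a non-empty quasi-compact fppf gerbe over $\spec k$, it is locally of finite presentation and hence noetherian; choosing a smooth atlas $\spec A \to \mathcal{G}$ and a closed point of $\spec A$ (Nullstellensatz) yields a finite field extension $k'/k$ with $\mathcal{G}(k') \neq \emptyset$. The chosen section trivializes the gerbe: $\mathcal{G}_{k'} \cong B_{k'}G$, where $G$ is the automorphism group of the section, a group scheme of finite type over $k'$.

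Next, the hypothesis that $\mathcal{G}$ is not poorly stabilized passes to $\mathcal{G}_{k'} = B_{k'}G$ (the property is geometric), so $G$ is not poor in the sense of Theorem~\ref{MT:compact-gen-of-BG}. That theorem then gives that $\DQCOH(B_{k'}G)$ is compactly generated. Finally, the morphism $\pi\colon \mathcal{G}_{k'} \to \mathcal{G}$ is the base change of the finite, faithfully flat morphism $\spec k' \to \spec k$, hence is itself finite and faithfully flat between noetherian algebraic stacks. Lemma~\ref{L:qppsh} then applies and produces compact generators of $\DQCOH(\mathcal{G})$ as the derived pushforwards of the compact generators of $\DQCOH(\mathcal{G}_{k'})$.

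The only genuinely non-formal step is trivializing the gerbe by a \emph{finite} (as opposed to merely fppf) extension of $k$, but this is routine once one notes quasi-compactness plus fppf-local finite presentation. All remaining work is an invocation of machinery already in place: Theorem~\ref{MT:compact-gen-of-BG} for compact generation of $B_{k'}G$ and Lemma~\ref{L:qppsh} for descent along the finite flat map $\mathcal{G}_{k'} \to \mathcal{G}$.
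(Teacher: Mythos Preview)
Your proof is correct and follows essentially the same approach as the paper's: the forward direction cites the point-wise obstruction from~\cite{hallj_neeman_dary_no_compacts}, and the reverse direction neutralizes the gerbe over a finite extension $k'/k$ and descends via Lemma~\ref{L:qppsh}. The paper's own proof is terser (it simply says ``Lemma~\ref{L:qppsh} permits us to reduce to the situation where $\mathcal{G}$ is neutral''), but your explicit justification of the finite neutralization step via a smooth atlas and the Nullstellensatz is exactly what underlies that sentence.
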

\begin{proof}
  If $\mathcal{G}$ is poorly stabilized, then $\DQCOH(\mathcal{G})$ is
  not compactly
  generated~\cite[Thm.~1.1]{hallj_neeman_dary_no_compacts}. Conversely, Lemma \ref{L:qppsh} permits us to reduce to the situation where $\mathcal{G}$ is neutral. The result now follows from Theorem~\ref{MT:compact-gen-of-BG}.
\end{proof}

More generally, we have the following.
\begin{theorem}
Let $S$ be a scheme and let $G\to S$ be a flat group scheme of finite
presentation. Let $X$ be a quasi-compact algebraic stack over $S$ with
quasi-finite and separated diagonal and let $\mathcal{G}\to X$ be a
$G$-gerbe. Assume that either
\begin{enumerate}
\item $S$ is the spectrum of a field $k$ and $G$ is not poor, that is, either
$S$ has characteristic zero or $\red{\overline{G}}^0$ is semi-abelian; or
\item $S$ is arbitrary and $G\to S$ is of multiplicative type.
\end{enumerate}
Then $\mathcal{G}$ is $\aleph_0$-crisp (and $1$-crisp if $G \to S$ is
proper). In
particular, $\DQCOH(\mathcal{G})$ is compactly generated.
\end{theorem}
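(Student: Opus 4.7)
The strategy is to reduce to Corollary \ref{C:gerbes} and Theorem \ref{MT:compact-gen-of-BG} via the descent result \cite[Thm.~C]{perfect_complexes_stacks}, which propagates $\aleph_0$-crispness along representable, separated, quasi-finite, faithfully flat morphisms of finite presentation. First I would reduce, using standard noetherian approximation, to the case where $S$ is noetherian and $X$ is of finite type over $S$.

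The central step is to construct a representable, separated, quasi-finite, faithfully flat morphism $Y\to X$ of finite presentation such that the pullback $\mathcal{G}\times_X Y$ is $\aleph_0$-crisp; since $\mathcal{G}\times_X Y\to \mathcal{G}$ inherits these properties, \cite[Thm.~C]{perfect_complexes_stacks} then gives the conclusion. In case (1), Lemma \ref{L:qppsh} permits passage to $\red{G}^0$, which is semi-abelian in positive characteristic and smooth in characteristic zero; for such smooth $G$ the structure morphism $B_SG\to S$ is smooth, hence the classifying map $X\to B_SG$ of the gerbe is smooth, and transverse slicing of this smooth cover (using the quasi-finite separated diagonal of $X$) produces a quasi-finite $Y\to X$ with $\mathcal{G}\times_X Y\cong BG\times_S Y$. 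In case (2), $G\to S$ is of multiplicative type; I would stratify $X$ using Proposition \ref{P:stabilizer-filtration} to reduce to pieces over which the $G$-gerbe becomes trivial along a finite flat cover arising from the diagonalizable band, again yielding a suitable $Y$.

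It remains to establish $\aleph_0$-crispness of $BG\times_S Y$. In case (1), Theorem \ref{MT:compact-gen-of-BG} supplies compact generators of $\DQCOH(BG)$; their external products with compact generators of $\DQCOH(Y)$ (which exist because $Y$ is an $\aleph_0$-crisp algebraic space, cf.~\cite{perfect_complexes_stacks}) provide a compact-generating set on $BG\times_k Y$, and the open-substack condition is verified by noetherian induction on $Y$, using that every quasi-compact open of $BG\times_k Y$ is the pullback of an open of $Y$. In case (2), the stack $BG\times_S Y$ has nice stabilizers with finitely presented inertia, so is concentrated by Theorem \ref{T:concentrated-stacks}; fppf-locally on $Y$ it is nicely presented (via an $H$-torsor and a $(\Gm)^n$-torsor), which gives the resolution property, and \cite[Prop.~8.4]{perfect_complexes_stacks} then yields compact vector-bundle generators and $\aleph_0$-crispness.

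For the $1$-crispness refinement when $G\to S$ is proper, properness forces the affinization of $\red{\bar{G}}^0$ to be trivial, hence unipotent, so by Theorem \ref{MT:compact-gen-of-BG}\itemref{MT:compact-gen-of-BG:one}, $\DQCOH(BG)$ is generated by a single perfect complex; this single-generator property persists through external product and faithfully flat descent. The main obstacle I anticipate is producing the quasi-finite trivializing cover $Y\to X$: gerbes are a priori only fppf-locally trivial, and extracting a representable, quasi-finite, faithfully flat trivialization requires combining the smoothness of $B_SG\to S$ (or the diagonalizable structure of $G$ in case (2)) with the quasi-finite separated diagonal of $X$ and a careful transversality argument. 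A secondary technical point, which may require its own inductive argument, is verifying that $\aleph_0$-crispness, rather than merely compact generation, is preserved under the base change from $BG$ to $BG\times_S Y$ and through the descent step.
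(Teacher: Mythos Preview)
Your high-level strategy---localize along a representable quasi-finite flat cover via \cite[Thm.~C]{perfect_complexes_stacks}, then verify $\aleph_0$-crispness of the trivialized gerbe---matches the paper. The execution has two problems.

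First, the trivialization step is garbled. There is no ``classifying map $X\to B_SG$'' associated to a $G$-gerbe $\mathcal{G}\to X$; such a map would exhibit $X$ as a $G$-\emph{torsor}. What you presumably want is that $\mathcal{G}\to X$ itself is smooth (e.g., once $G$ is), hence admits sections \'etale-locally on $X$; an \'etale-local section trivializes the gerbe, and \'etale covers are quasi-finite. The paper is more direct still: localize so that $X$ is affine, then use that the gerbe trivializes over a finite extension of each residue field and spread out. Proposition~\ref{P:stabilizer-filtration} plays no role here---it stratifies by stabilizer type and says nothing about gerbe triviality.

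Second, once $\mathcal{G}\simeq X\times_S BG$ with $X$ affine, the paper does \emph{not} argue via external products of generators; that would require a K\"unneth-type statement for crispness which you have not supplied. In case~(2), after a further quasi-finite flat base change making $G$ diagonalizable, $X\times_S BG$ is concentrated with affine diagonal and the resolution property, so \cite[Prop.~8.4]{perfect_complexes_stacks} gives $\aleph_0$-crispness directly; this part of your sketch is essentially right. In case~(1), after a base change on $k$ making $\red{G}^0$ a group scheme, Chevalley writes $\red{G}^0$ as an extension of an abelian variety $A$ by a smooth connected affine $G_\aff$ (a torus in positive characteristic). Then $X\times_k BG_\aff$ is $\aleph_0$-crisp by the same mechanism as case~(2); the $A$-torsor $X\times_k BG_\aff\to X\times_k B\red{G}^0$ allows Proposition~\ref{P:quasi-perfect-smooth-proper} together with \cite[Prop.~6.6]{perfect_complexes_stacks} to transport crispness; and the finite flat map $X\times_k B\red{G}^0\to X\times_k BG$ finishes via \cite[Thm.~C]{perfect_complexes_stacks}. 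For proper $G$ one has $G_\aff=1$, so $X\times_k BG_\aff=X$ is $1$-crisp, giving the refinement.
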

\begin{proof}
The question is local on $X$ with respect to quasi-finite faithfully flat
morphisms of finite presentation~\cite[Thm.~C]{perfect_complexes_stacks}. We
may thus assume that $X$ is affine and that $\mathcal{G}\to X$ is a trivial
$G$-gerbe, that is, $\mathcal{G}\simeq X\times_S BG$. We may also replace $S$ by
a quasi-finite flat cover and in the first case assume that $\red{G}^0$ is
a group scheme and in the second case assume that $G\to S$ is diagonalizable.

In the second case $X\times_S BG$ is concentrated, has affine diagonal and has 
the resolution property. It is thus
$\aleph_0$-crisp~\cite[Prop.~8.4]{perfect_complexes_stacks}.

In the first case, we may, after further base change, apply Chevalley's theorem
and write $\red{G}^0$ as an extension of an abelian variety $A/k$ by a smooth
connected affine group $G_\aff$ (a torus in positive characteristic). The stack
$X\times_k
BG_\aff$ is $\aleph_0$-crisp as in the previous case ($1$-crisp if $G$ is
proper). The morphism
$X\times_k BG_\aff\to X\times_k B\red{G}^0$ is a torsor under $A$, hence
Proposition~\ref{P:quasi-perfect-smooth-proper} and \cite[Prop.~6.6]{perfect_complexes_stacks} applies. Hence, $X\times_k B\red{G}^0$ is $\aleph_0$-crisp. Finally, since $B\red{G}^0\to
BG$ is finite and flat, $X\times_k BG$ is $\aleph_0$-crisp
by~\cite[Thm.~C]{perfect_complexes_stacks}.
\end{proof}
%
\appendix
\section{Grothendieck duality for smooth and
  representable morphisms of algebraic stacks} \label{A:duality}
In this Appendix we prove a variant of
\cite[Prop.~1.20]{2008arXiv0811.1955N} that was necessary for this
paper. The difficult parts of the following Proposition, for schemes, are
well-known \cite[Thm.~4.3.1]{MR1804902}.

Recall that a morphism of algebraic stacks $X\to Y$ is \emph{schematic} (or
strongly representable) if for every scheme $Y'$ and morphism $Y'\to Y$, the
pull-back $X\times_Y Y'$ is a scheme. We say that $X\to Y$ is \emph{locally
schematic} if there exists a faithfully flat morphism $Y'\to Y$, locally
of finite presentation, such that $X\times_Y Y'$ is a scheme. In particular, if $S$
is a scheme, $G\to S$ is a group \emph{scheme}, $Y$ is an $S$-stack
and $X\to Y$ is a $G$-torsor, then $X\to Y$ is locally schematic (but perhaps
not schematic).

\begin{proposition}\label{P:quasi-perfect-smooth-proper}
  Let $f\colon X \to Y$ be a proper, smooth, and locally schematic
  morphism of noetherian algebraic stacks of relative dimension
  $n$. Let $f^!\colon \DQCOH(Y) \to \DQCOH(X)$ be the functor
  $\omega_{f}[n] \otimes_{\Orb_X} \LDERF f^*(-)$, where
  $\omega_{f} = \wedge^n \Omega_{f}$.
  \begin{enumerate}
  \item There is a \emph{trace morphism} $\gamma_f \colon \RDERF^n f_*\omega_f
    \to \Orb_Y$ that is compatible with locally noetherian base change
    on $Y$.
  \item The trace morphism induces a natural transformation
    $\Tr_f\colon \RDERF f_*f^! \to \ID{}$, which is compatible with
    locally noetherian base change and gives rise to a
    sheafified duality quasi-isomorphism whenever $M\in \DQCOH(X)$ and
    $N\in \DQCOH(Y)$:
    \[
    J_{f,M,N} \colon \RDERF f_*\SRHom_{\Orb_X}(M,f^!N) \to
    \SRHom_{\Orb_Y}(\RDERF f_*M,N).
    \]
    In particular, $f^!$ is a right adjoint to
    $\RDERF f_*\colon \DQCOH(X)\to \DQCOH(Y)$. 
  \end{enumerate}
\end{proposition}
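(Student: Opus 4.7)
The approach is to reduce to the schematic case, where the result is classical~\cite[Thm.~4.3.1]{MR1804902}, and then to descend the construction via faithfully flat descent. By hypothesis, there exists a faithfully flat morphism $g\colon Y'\to Y$, locally of finite presentation, such that the base change $f'\colon X'\to Y'$ is a morphism of noetherian schemes; moreover, $f'$ remains proper and smooth of relative dimension~$n$.

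For (1), the schematic case provides a trace $\gamma_{f'}\colon \RDERF^n f'_*\omega_{f'}\to \Orb_{Y'}$, compatible with flat base change. Hence the two pullbacks of $\gamma_{f'}$ along the projections $Y'\times_Y Y'\to Y'$ agree, and faithfully flat descent of quasi-coherent sheaves produces a morphism $\gamma_f\colon \RDERF^n f_*\omega_f\to \Orb_Y$. Compatibility with locally noetherian base change on $Y$ follows from the corresponding property in the scheme case combined with descent.

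For (2), since $f$ is smooth and proper with schematic fibers, $f$ is concentrated, so the projection formula
\[
\RDERF f_*\bigl(\omega_f[n]\otimes_{\Orb_X}^{\LDERF} \LDERF f^*N\bigr) \homotopic \RDERF f_*\omega_f[n]\otimes_{\Orb_Y}^{\LDERF} N
\]
holds for every $N\in\DQCOH(Y)$. Composing with $\gamma_f$ gives $\Tr_{f,N}\colon \RDERF f_*f^!N\to N$. The duality map $J_{f,M,N}$ is then defined, in the standard way, as the composition
\[
\RDERF f_*\SRHom_{\Orb_X}(M,f^!N) \to \SRHom_{\Orb_Y}(\RDERF f_*M,\RDERF f_*f^!N) \to \SRHom_{\Orb_Y}(\RDERF f_*M,N),
\]
where the first arrow is the canonical map induced by $\RDERF f_*$ and the second is induced by $\Tr_{f,N}$. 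Naturality in $M$ and $N$ is immediate from the construction.

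To verify that $J_{f,M,N}$ is a quasi-isomorphism, one checks that the claim is smooth-local on~$Y$: both $\RDERF f_*$ and $f^!$ commute with flat base change on $Y$, the latter by its very definition and the former because $f$ is concentrated (see~\cite[\S2]{perfect_complexes_stacks}). Pulling back along~$g$, we reduce to the scheme case, where $J_{f',M',N'}$ is a quasi-isomorphism by~\cite[Thm.~4.3.1]{MR1804902}. The adjunction $\RDERF f_*\dashv f^!$ then follows by applying $\RDERF\Gamma$ to $J_{f,M,N}$ and invoking the Yoneda lemma. The main obstacle is the careful descent through the lisse-\'etale topology used to define $\DQCOH$; this is precisely where one needs the concentration and base change machinery of~\cite{perfect_complexes_stacks} in order to know that the formation of $\RDERF f_*\SRHom_{\Orb_X}(M,f^!N)$ and $\SRHom_{\Orb_Y}(\RDERF f_*M,N)$ is compatible with the faithfully flat base change $g$.
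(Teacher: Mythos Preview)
Your outline follows the paper's strategy for constructing $\gamma_f$ and $\Tr_f$ by descent from the schematic case, and that part is fine (modulo the minor caveat that $Y'\times_Y Y'$ need not be a scheme, so one must pass to an \'etale scheme cover before invoking the scheme-level trace there; the paper does this explicitly).

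The real gap is in your final step. The reference \cite[Thm.~4.3.1]{MR1804902} only yields the duality quasi-isomorphism $J_{f',M',N'}$ when $M'\in\DCAT^b_{\COH}(X')$ and $N'\in\DQCOH^b(Y')$; it does \emph{not} cover arbitrary unbounded $M'\in\DQCOH(X')$ and $N'\in\DQCOH(Y')$. So even after you reduce to schemes, you are not done. Moreover, your proposed descent argument---checking that both sides of $J_{f,M,N}$ commute with flat base change along $g$---does not go through as stated: the formation of $\SRHom_{\Orb_Y}(\RDERF f_*M,N)$ is \emph{not} compatible with flat base change for unbounded $M$ and $N$ in general, and the machinery of \cite{perfect_complexes_stacks} you invoke does not address internal $\SRHom$.

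The paper handles this by a genuinely different route. After establishing $J_{f,M,N}$ is a quasi-isomorphism for $M\in\DCAT^b_{\COH}(X)$ and $N\in\DQCOH^b(Y)$ (by descent, as you do), it then extends to all of $\DQCOH$ in several steps: it uses left-completeness of $\DQCOH(X)$ and $\DQCOH(Y)$ (writing $N$ and $f^!N$ as homotopy limits of their truncations) together with explicit truncation estimates exploiting that $f$ has cohomological dimension $\leq n$, to reduce to $M\in\DQCOH^+(X)$ and $N\in\DQCOH^+(Y)$; a dual homotopy-colimit argument reduces to $M\in\DQCOH^b(X)$; and finally, since every quasi-coherent sheaf on $X$ is a quotient of a direct sum of coherent sheaves and both sides take coproducts to products, a way-out argument passes from $\DCAT^b_{\COH}$ to $\DQCOH^b$. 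This chain of reductions is the substantive content missing from your proposal.
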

\begin{proof}
   For the moment, assume that $f$ is a
  morphism of schemes. By \cite[Cor.~3.6.6]{MR1804902},
  there is a \emph{trace morphism} $\gamma_{f} \colon
  \RDERF^nf_*\omega_{f} \to \Orb_Y$ that is compatible with locally
  noetherian base change on $Y$. For $N\in \DQCOH(Y)$, there is also
  an induced morphism, which we denote as $\Tr_f(N)$:
  \[
  \RDERF f_* f^!N\simeq (\RDERF f_* \omega_{f})[n]
  \otimes^{\LDERF}_{\Orb_{Y}} N \to (\RDERF^n f_*\omega_{f})
  \otimes^{\LDERF}_{\Orb_Y} N \xrightarrow{\gamma_f \tensor \ID{}} N,   
  \]
  where the first isomorphism is the Projection Formula
  \cite[Prop.~5.3]{MR1308405} and the second morphism is given by the
  truncation map $\tau_{\geq 0}$---using that $\RDERF f_*$ has cohomological
  dimension $n$. Tor-independent base change (e.g., \cite[Cor.~4.13]{perfect_complexes_stacks}) shows that the morphism $\Tr_f(N)$ is natural and
  compatible with locally noetherian base change and induces a sheafified duality
  morphism:
  \[
  J_{f,M,N} \colon \RDERF f_* \SRHom_{\Orb_X}(M,f^!N) \to
  \SRHom_{\Orb_Y}(\RDERF f_*M,N),
  \]
  where $M \in \DQCOH(X)$ and $N \in \DQCOH(Y)$. The morphism
  $J_{f,M,N}$ is a quasi-isomorphism whenever $M \in
  \DCAT_{\COH}^b(X)$ and $N \in \DQCOH^b(Y)$ \cite[Thm.~4.3.1]{MR1804902}.

  Returning to the general case, we note that by hypothesis, there is
  a noetherian scheme $U$ and a smooth and surjective
  morphism $p\colon U \to Y$ such that in the $2$-cartesian square of
  algebraic stacks:
  \[
  \xymatrix{X_U \ar[d]_{f_U}\ar[r]^{p_X} & X \ar[d]^f \\ U\ar[r]^p & Y,}
  \]
  the morphism $f_U$ is a proper and smooth morphism of relative
  dimension $n$ of noetherian schemes.  Let $R=U\times_Y U$, which is
  a noetherian algebraic space. Let $\tilde{R} \to R$ be an \'etale
  surjection, where $\tilde{R}$ is a noetherian scheme. Let $s_1$ and
  $s_2$ denote the two morphisms $\tilde{R} \to R \to U$ and let
  $f_{\tilde{R}} \colon X_{\tilde{R}} \to \tilde{R}$ denote the
  pullback of $f$ along $p\circ s_1\colon \tilde{R} \to Y$. By the above,
  there are trace morphisms $\gamma_{f_U}$ and
  $\gamma_{f_{\tilde{R}}}$ that are compatible with locally noetherian
  base change. In particular, for $i=1$ and $i=2$ the
  following diagram commutes:
  \[
  \xymatrix{s_i^*\RDERF^n(f_U)_*\omega_{f_U} \ar[r]^-{\sim}
    \ar[d]_{s_i^*\gamma_{f_U}} & \RDERF^n(f_{\tilde{R}})_*\omega_{f_{\tilde{R}}}
    \ar[d]^{\gamma_{f_{\tilde{R}}}} \\ s_i^*\Orb_U \ar[r]^-{\sim} &
    \Orb_{\tilde{R}} }
  \]
  By smooth descent, there is a uniquely induced morphism $\gamma_f \colon
  \RDERF^nf_*\omega_f \to \Orb_Y$ such that the following diagram
  commutes:
  \[
  \xymatrix{p^*\RDERF^nf_*\omega_f \ar[r]^-{\sim}
    \ar[d]_{p^*\gamma_f}& \RDERF^n(f_U)_*\omega_{f_U}
    \ar[d]^{\gamma_{f_U}} \\ p^*\Orb_Y \ar[r]^-{\sim} & \Orb_U.}
  \]
  Now the morphism $f$ is quasi-compact,
  quasi-separated, and representable---whence concentrated
  \cite[Lem.~2.5 (3)]{perfect_complexes_stacks}. By the Projection Formula
  \cite[Cor.~4.12]{perfect_complexes_stacks}, there is a natural
  quasi-isomorphism for each $N \in \DQCOH(Y)$:
  \[
  (\RDERF f_*\omega_{f}[n]) \otimes^{\LDERF}_{\Orb_{Y}} N \simeq
  \RDERF f_*f^!N.
  \]
  Since $f$ is also proper, flat, and representable with fibers of
  relative dimension $\leq n$ it follows that $\RDERF f_*\omega_{f}
  \in \DCAT_{\COH}^{[0,n]}(Y)$. Inverting the quasi-isomorphism above and
  truncating, we obtain a natural morphism:
  \[
  \RDERF f_* f^!N \simeq (\RDERF f_* \omega_{f})[n]
  \otimes^{\LDERF}_{\Orb_{Y}} N \to (\RDERF^n f_*\omega_{f})
  \otimes^{\LDERF}_{\Orb_Y} N \xrightarrow{\gamma_f \otimes \ID{}} N,  
  \]
  which we denote as $\Tr_f(N)$. If $M \in \DQCOH(X)$ and $N\in \DQCOH(Y)$, let
  \begin{align*}
  A(M,N) &= \RDERF f_* \SRHom_{\Orb_X}(M,f^!N),\\
  B(M,N) &= 
  \SRHom_{\Orb_Y}(\RDERF f_*M,N),\text{ and}\\
  J_{f,M,N} &\colon A(M,N) \to B(M,N),
  \end{align*}
  where $J_{f,M,N}$ is the sheafified duality morphism
  induced by $\RDERF f_* f^!N\to N$.
  Furthermore, there is a natural
  isomorphism of functors:
  \[
  p^*\RDERF f_*f^! \simeq \RDERF (f_U)_*p_X^*f^! \simeq \RDERF(f_U)_*(f_U)^!p^*,
  \]
  and the following diagram is readily observed to commute for each
  $N\in \DQCOH(Y)$:
  \[
  \xymatrix@C3pc{p^*\RDERF f_* f^!N \ar[r]^-{p^*\Tr_f(N)} \ar[d] &
    p^*N \ar@{=}[d] \\ \RDERF (f_U)_*(f_U)^!p^*N \ar[r]^-{\Tr_{f_U}(p^*N)} & p^*N.  }
  \]
  We have already seen that $J_{f_U,M,N}$ is a quasi-isomorphism
  whenever $M \in \DCAT_{\COH}^b(X_U)$ and $N\in \DQCOH^b(U)$. Thus,
  by tor-independent base change and the commutativity of the diagram above, the
  morphism $J_{f,M,N}$ is a quasi-isomorphism whenever $M
  \in\DCAT_{\COH}^b(X)$ and $N\in \DQCOH^b(Y)$. 

  It remains to prove that $J_{f,M,N}$ is a quasi-isomorphism for all
  $M \in \DQCOH(X)$ and all $N \in \DQCOH(Y)$. By
  \cite[Thm.~B.1]{hallj_neeman_dary_no_compacts}, 
  $\DQCOH(X)$ and $\DQCOH(Y)$ are left-complete triangulated categories. Thus,
  we have distinguished triangles:
  \[
  N\to \prod_{k\leq 0} \tau^{\geq k}N\to \prod_{k\leq 0} \tau^{\geq k}N
  \quad \text{and} \quad
  f^!N\to \prod_{k\leq 0} \tau^{\geq k}f^!N\to \prod_{k\leq 0} \tau^{\geq k}f^!N,
  \]
  where the first maps are the canonical ones and the second maps are
  $1-\textrm{shift}$. Since $f^![-n]$ is $t$-exact, we also have a distinguished
  triangle:
  \[
  f^!N\to \prod_{k\leq 0} f^!\tau^{\geq k}N\to \prod_{k\leq 0} f^!\tau^{\geq k}N.
  \]
  Hence we have
  a natural morphism of distinguished triangles:
  \[
  \xymatrix@C-1pc{A(M,N) \ar[r]
    \ar[d]_{J_{f,M,N}} & \ar[d]_{(J_{f,M,\trunc{\geq k}N})}
    \displaystyle\prod_{k\leq 0} A(M,\tau^{\geq k}N) \ar[r]
    & \ar[d]_{(J_{f,M,\trunc{\geq k}N})}
    \displaystyle\prod_{k\leq 0}
    A(M,\tau^{\geq k}N) \\
    B(M,N) \ar[r]
    & \displaystyle\prod_{k\leq 0} B(M,\tau^{\geq k}N) \ar[r]
    & \displaystyle\prod_{k\leq 0} B(M,\tau^{\geq k}N).}
  \]
  Since $f$ has cohomological dimension $\leq n$, it follows that
  there are natural quasi-isomorphisms for every pair of integers $k$ and $p$:
  \begin{equation}\label{E:bounded-eqn}
  \begin{aligned}
    \trunc{\leq p}A(M,\tau^{\geq k}N)
    &= \trunc{\leq p}\RDERF f_*\SRHom_{\Orb_X}(M,f^!\tau^{\geq k}N) \\
    &\homotopic \trunc{\leq p}\RDERF f_*\trunc{\leq p}\SRHom_{\Orb_X}(M,\tau^{\geq k-n}f^!N)\\
    &\homotopic \trunc{\leq p}\RDERF f_*\SRHom_{\Orb_X}(\tau^{\geq k-n-p}M,f^!\tau^{\geq k}N) \\
    &= \trunc{\leq p}A(\tau^{\geq k-n-p}M,\tau^{\geq k}N) \\
    \trunc{\leq p}B(M,\tau^{\geq k}N)
    &= \trunc{\leq p}\SRHom_{\Orb_Y}(\RDERF f_* M,\tau^{\geq k}N) \\
    &\homotopic
    \trunc{\leq p}\SRHom_{\Orb_Y}(\tau^{\geq k-p}\RDERF f_*M,\tau^{\geq k}N)\\
    &\homotopic 
    \trunc{\leq p}\SRHom_{\Orb_Y}(\RDERF f_*(\tau^{\geq k-n-p}M),\tau^{\geq k}N)\\
    &=\trunc{\leq p}B(\tau^{\geq k-n-p}M,\tau^{\geq k}N).
  \end{aligned}
  \end{equation}
  Thus it is enough to establish that $J_{f,M,N}$ is a
  quasi-isomorphism when $M\in \DQCOH^+(X)$ and $N
  \in \DQCOH^+(Y)$. A similar argument,
  but this time using the homotopy colimit
$\bigoplus_{k\geq 0} \tau^{\leq k}M\to \bigoplus_{k\geq 0} \tau^{\leq k}M \to M$
 (cf.,
  \cite[Lem.~4.3.2]{MR2434692}), further permits a reduction to
  the situation where $M \in \DQCOH^b(X)$ and $N\in
  \DQCOH^b(Y)$. 

  For the remainder of the proof we fix $N\in \DQCOH^b(Y)$. Let $F_N$ be
  the functor $A(-,N)$ and let $G_N$ be the
  functor $B(-,N)$, both regarded as contravariant
  triangulated functors from $\DQCOH(X)$ to $\DQCOH(Y)$. Since $N$ is bounded
  below, the functors
  $F_N$ and $G_N$ are bounded below~\eqref{E:bounded-eqn}
  and $J_{f,-,N}$ induces a
  natural transformation $F_N \to G_N$.

  Let $C \subseteq
  \QCOH(X)$ be the collection of objects of the form $\bigoplus_{i\in I}
  L_i$, where $L_i \in \COH(X)$ and $I$ is a set. Recall that $J_{f,L,N}$
  is a quasi-isomorphism whenever $L\in \COH(X)$ and $N\in \DQCOH^b(Y)$. 
  Since $F_N$ and $G_N$ both send
  coproducts to products,
  it follows that $J_{f,\oplus L_i,N}=\prod J_{f,L_i,N}$, so
  $J_{f,L,N}$ is also a quasi-isomorphism whenever $L=\bigoplus L_i \in C$.

  Every $M \in \QCOH(X)$ is a quotient of some object of
  $C$~\cite[Prop.~15.4]{MR1771927}. By standard ``way-out'' arguments (e.g.,
  \cite[Compl.~1.11.3.1]{MR2490557}) it now follows that $J_{f,M,N}$
  is a quasi-isomorphism for all $M\in \DQCOH^-(X)$, and the result
  follows.
\end{proof}
\begin{remark}
Note that if $A$ is an abelian variety and $\pi\colon BA\to \spec k$ is the
classifying stack, then $\RDERF\pi_*\colon \DQCOH(BA)\to \DCAT(\MOD(k))$ does
not admit a right adjoint. In fact, $BA$ is not concentrated (see
Section~\ref{S:coh-dim-BG}), so $\RDERF\pi_*$ does not preserve small
coproducts; thus, cannot be a left adjoint.
\end{remark}

\bibliography{references}
\bibliographystyle{bibstyle}
\end{document}